\pdfoutput=1

\documentclass{amsart}

\usepackage{graphicx}
\usepackage{subcaption}
\usepackage{hyperref}
\usepackage[section]{placeins}

\newcommand{\Vol}{\mathit{Vol}_n}

\newcommand{\expect}[1]{\mathbb{E} \big[ #1 \big]}
\newcommand{\condexpect}[2]{\mathbb{E} \big[ \, #1 \,  \big| \, #2 \, \big]}


\newtheorem{thm}{Theorem}[section]
\newtheorem{cor}[thm]{Corollary}
\newtheorem{prop}[thm]{Proposition}
\newtheorem{lem}[thm]{Lemma}
\newtheorem{conj}[thm]{Conjecture}

\makeatletter
\let\c@equation\c@thm
\makeatother
\numberwithin{equation}{section}

\bibliographystyle{plain}

\usepackage[foot]{amsaddr}

\title{Volumes of Random Alternating Link Diagrams}

\author{Malik Obeidin}
\address{University of Illinois at Urbana-Champaign}
\email{mobeidin@illinois.edu}
\date{\today}

\begin{document}

\maketitle

\begin{abstract}
We describe a model of random links based on random 4-valent maps, which can be sampled due to the work of Schaeffer. We will look at the relationship between the combinatorial information in the diagram and the hyperbolic volume.  Specifically, we show that for random alternating diagrams, the expected hyperbolic volume is asymptotically linear in the number of crossings. For nonalternating diagrams, we compute the probability of finding a given, arbitrary tangle around a given crossing, and show that a random link diagram will be highly composite. Additionally, we present some results of computer experiments obtained from implementing the model in the program \emph{SnapPy}.

\end{abstract}


\section{Introduction}

Random knots and links have been studied through a variety of avenues --- initially inspired by physical problems such as the knotting that occurs in bacterial DNA. Additionally, link exteriors form an important, classical family of 3-manifolds.  Since the set of links is countably infinite, one way to select randomly from this set is to filter links by some kind of complexity, such that the number of links of any given complexity is finite. From there, we can sample uniformly among links of a given complexity, and see what happens as the complexity increases without bound --- the choice of this complexity gives different models of random links, which can have different asymptotic behaviors.


Some previously studied models of random knots and links include the Petaluma model \cite{petaluma}, random polygonal walks \cite{diaopippengersumners}, random braids \cite{ma}, and the Chebyshev billiard table model \cite{cohenkrishnan}. In this paper, we will examine a model which samples uniformly from (rooted) link diagrams of a given number of crossings. This model is also studied by Chapman in \cite{chapman}. In the alternating case, using the results of Lackenby, Thurston, and Agol in \cite{lackenby}, we will show that the expected volume grows linearly in the number of crossings of the link diagram.

\begin{thm}
\label{thm:expectedvolume}
Let $\Vol$ be the random variable which returns the hyperbolic volume of a random alternating link diagram with $n$ crossings. The expected hyperbolic volume is bounded by
\begin{equation*}
  \left( \frac{19v_3}{54} \right) n + \left( \frac{10v_3}{27}-2 \right) +O\left( \frac{1}{n} \right) \leq \expect{\Vol}  \leq  v_8 n
\end{equation*}
Numerically, the coefficients on $n$ of the lower bound and upper bounds are approximately 0.3571 and 3.6638, respectively. The constants $v_3$ and $v_8$ are the volumes of the hyperbolic ideal regular tetrahedron and octahedron, respectively.
\end{thm}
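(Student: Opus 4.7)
The plan is to prove the upper and lower bounds by two quite different arguments. The upper bound is essentially deterministic: for any alternating diagram $D$ with $n$ crossings, the standard ideal polyhedral decomposition of the complement (assigning one combinatorial ideal octahedron to each crossing) combined with the fact that the hyperbolic volume of an ideal octahedron in $\mathbb{H}^3$ is at most $v_8$ gives the pointwise bound $\mathrm{vol}(S^3 \setminus L) \leq v_8 \cdot n$. Since this holds for every diagram, taking the expectation over the uniform distribution on $n$-crossing rooted alternating diagrams produces $\expect{\Vol} \leq v_8 n$ immediately, with no probabilistic input.

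For the lower bound, I would apply the Lackenby--Agol--Thurston theorem, which states that for any prime alternating hyperbolic link $L$ with alternating diagram $D$ and twist number $t(D)$, one has $\mathrm{vol}(S^3 \setminus L) \geq c_1 \, t(D) - c_2$ for explicit positive constants $c_1, c_2$. The key combinatorial identity reducing the problem to random-map enumeration is $t(D) = n - B(D)$, where $B(D)$ is the number of bigon faces of $D$: a twist region containing $k$ consecutive crossings accounts for exactly $k-1$ bigon faces. Hence $\expect{t(D)} = n - \expect{B(D)}$, and the whole lower bound hinges on computing $\expect{B(D)}$ for a uniformly random rooted 4-valent planar map on $n$ vertices, which via Schaeffer's bijection is exactly the underlying combinatorial object of our random alternating diagram.

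I would carry out the computation of $\expect{B(D)}$ by generating-function methods. Let $M(x) = \sum_n M_n x^n$ enumerate rooted 4-valent planar maps by number of vertices (for which Tutte gave a closed form), and let $B(x)$ be the marked series where one bigon face is distinguished. A bigon-marking substitution in Tutte's functional equation for $M(x)$, or equivalently a bijective argument via Schaeffer's encoding, should express $B(x)$ cleanly enough that singularity analysis at the dominant singularity of $M(x)$ yields the limiting density $\expect{B(D)}/n$ together with the first correction. Substituting back into the Lackenby bound is then designed to produce the specific coefficients $\frac{19 v_3}{54}$ and $\frac{10 v_3}{27} - 2$, with the $O(1/n)$ error accounting for subleading terms in the singular expansion.

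I expect the main obstacle to be precisely this extraction: obtaining a workable closed form for $B(x)$ and executing the singularity analysis carefully enough to reproduce the exact rational multiples of $v_3$ in the statement, rather than merely an asymptotic order of growth. A secondary technical point is that Lackenby's theorem assumes the link is prime and hyperbolic; this must be handled either by showing that non-prime or non-hyperbolic diagrams are asymptotically negligible in expectation, or by invoking subadditivity of volume under connected sum to reduce to the prime hyperbolic case.
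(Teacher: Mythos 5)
Your upper bound is exactly the paper's: the octahedral decomposition gives the pointwise bound $\mathrm{vol} \leq v_8 n$ for every diagram (with the convention that nonhyperbolic links have volume zero), and expectation is taken trivially. Your lower-bound strategy also matches the paper's skeleton --- Lackenby--Agol--Thurston plus the identity $t(D) = n - B(D)$, reducing everything to the expected number of bigons --- but you diverge on how that expectation is computed. The paper does not set up a bigon-marked generating function; it uses Brown's exact enumeration of nonseparable planar maps refined by root-face valence, computes the probability $P(n,2) = |SQ(n,2)|/|SQ(n)| = \frac{4}{27} + \frac{10}{27n} + O(1/n^2)$ that the \emph{root face} is a bigon, and transfers this to $\expect{N_2} = 2n\,P(n,2) + O(na^n)$ using the Richmond--Wormald theorem that all but an exponentially small fraction of these maps are asymmetric (so each unrooted map contributes exactly $4n$ rootings). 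Your marked-series approach would, if executed, bypass the asymmetry argument entirely, which is a genuine simplification in principle; the cost is that you must derive the marked functional equation yourself, whereas Brown's root-valence formula is already in closed form in the literature.

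The genuine gap is your choice of underlying combinatorial class. You propose to work with \emph{all} rooted 4-valent planar maps (Tutte's enumeration) and to dispose of primeness by arguing that non-prime diagrams are asymptotically negligible or by subadditivity of volume. Neither works. In the uniform model on all 4-valent maps, separable (non-prime) diagrams are not rare --- the paper's own Theorem \ref{thm:genericallynothyperbolic} shows that a uniformly random 4-valent map contains linearly many connect-sum tangles, so non-prime diagrams have probability tending to $1$. The paper instead \emph{defines} the alternating model on $SQ(n)$, the 3-edge-connected 4-valent maps (equivalently prime reduced alternating diagrams, in medial bijection with Brown's nonseparable maps), so that primeness holds by construction and the only nonhyperbolic diagrams are the two rootings of the torus link $T(2,n)$ (Menasco), which contribute an exponentially small correction. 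This is not a cosmetic point: the bigon density in all 4-valent maps differs from that in $SQ(n)$, so running your singularity analysis on Tutte's series would produce the wrong constants --- the specific values $\frac{19v_3}{54}$ and $\frac{10v_3}{27}-2$ come from Brown's count $|SQ(n)| = \frac{2(3n-3)!}{n!(2n-1)!}$ and its root-valence refinement. You would also need to handle the corner case where a bigon chain closes into a loop (the $T(2,n)$ diagrams), for which $t(D)=1$ rather than $n - B(D)$; the paper treats this explicitly when conditioning on hyperbolicity.
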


So, up to a term that goes to zero, the expected volume is bounded by (increasing) linear functions of the number of crossings.  In big theta notation, $\expect{\Vol} = \Theta(n)$. The links we get in this model are generically hyperbolic --- we also take the convention that the hyperbolic volume of a nonhyperbolic link is zero.

The bounds given in \cite{lackenby} are given in terms of the twist number of a diagram, not the crossing number. The main difficulty here is then the computation of the expected twist number in our model, which is in turn deduced from the expected number of bigons in the complement of the diagram. We only use the lower bound from \cite{lackenby} --- it was pointed out to the author by Stavros Garoufalidis that there is an asymptotically sharp upper bound for the hyperbolic volume due to a construction of Dylan Thurston which divides the complement of an $n$-crossing link diagram in $S^3$ into $n$ octahedra. Since the volume of such a hyperbolic octahedron is bounded by the volume of the ideal regular octahedron, we get an immediate linear upper bound of the volume as a function of the crossing number, and the bound on the expectation trivially follows \cite{garoufalidisle}.

In the nonalternating case, we show that we generically do not get hyperbolic links.  This occurs in other models as well due to the phenomenon of \emph{local knotting}: in ``small'' regions of the diagram, we have positive probability of getting any possible picture, including ones which obstruct hyperbolicity by forcing the link diagram to be a satellite link or unknotted/unlinked (see Figure~\ref{fig:forbiddentangle}). This occurs in other models as well --- for example, for Gaussian random polygons \cite{jungreis}. Our result gives a formula for the probability of a local picture occurring, and shows it depends only on the ``size'' of the picture. To be more precise, a local picture is a rooted tangle diagram with $n$ crossings and $2p$ points intersecting the boundary, which embeds around a given crossing.  We compute the formula for the probability here, and use it to show that one expects a given local picture to occur linearly often (with respect to the number of crossings):

\begin{thm}
\label{thm:genericallynothyperbolic}
Let $T$ be a rooted tangle with $n$ crossings and $2p$ boundary points. Then, the number of rootings $N_{T,c}$ of a random rooted $c$-crossing link diagram $L$ for which $T$ embeds around the root has expectation which is asymptotically linear in $c$, the number of crossings:
\[  \expect{N_{T,c}} = (4c) 2^{-n} P(n,p,c) + O(cb^{-c}) \]
for some $b \in (0,1)$. Normalizing by the number of crossings $c$, we have a positive limiting expectation:
\[ \lim_{c \to \infty} \frac{1}{c} \expect{N_{T,c}} =  \left( 4\cdot2^{-n} \right) \frac{(3p)!}{9\cdot p! (2p-1)!} \left( \frac{2}{3} \right)^{p-2} 12^{-n} > 0 \]
\end{thm}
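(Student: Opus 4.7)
The plan is to exploit the bijection (used throughout the paper) between rooted link diagrams and pairs consisting of a rooted $4$-valent planar map together with over/under information at each vertex. Under this correspondence, a random $c$-crossing rooted link diagram is obtained by sampling a uniformly random rooted $4$-valent planar map with $c$ vertices, then assigning each vertex independently one of two crossing types. The event ``$T$ embeds around the root'' factors cleanly: its shadow tangle (the underlying $4$-valent sub-map) must appear around the root of the ambient map, and then the crossing information at each of the $n$ interior vertices must match $T$.

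First I would reduce from counting rootings to a single-corner probability. Since the diagram has $4c$ corners (one at each of the four quadrants of each crossing) and each corner is equally likely to be the root under a uniform rerooting, linearity of expectation gives
\begin{equation*}
  \expect{N_{T,c}} = 4c \cdot \Pr[\, T \text{ embeds around the root of a uniformly random rooted } c\text{-crossing diagram} \,].
\end{equation*}
The probability on the right factors as $2^{-n} \cdot P(n,p,c)$, where $P(n,p,c)$ is the probability, taken only over the underlying rooted $4$-valent map, that the shadow of $T$ fills in the $n$ crossings adjacent to the root and that its boundary meets the rest of the map in the prescribed $2p$ edges. This will give the first displayed formula once the error term from non-embedded tangles (which requires disjointness of the $T$-region from the rest) is shown to be exponentially small in $c$, yielding the $O(c b^{-c})$ correction.

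Next I would compute $P(n,p,c)$ by a gluing argument. Cutting the random $4$-valent map along the boundary of the $T$-region decomposes it into two rooted $4$-valent planar maps with boundary: the rooted interior piece (whose shadow must equal that of $T$, contributing a factor of $1$) and an ``exterior'' rooted $4$-valent map with $c-n$ interior vertices and $2p$ boundary half-edges. Thus $P(n,p,c)$ is the ratio of the number of such exterior maps to the total number of rooted $c$-crossing $4$-valent maps, and the asymptotic analysis will plug into Schaeffer's enumeration results (which are the backbone of the sampling method used elsewhere in the paper). The asymptotic count of rooted $4$-valent maps with $c$ vertices grows like a constant times $12^c \cdot c^{-5/2}$, while the analogous count with a boundary of $2p$ half-edges carries an extra combinatorial factor of the form $\frac{(3p)!}{9 \cdot p!(2p-1)!}\left(\tfrac{2}{3}\right)^{p-2}$ (the number of rooted $4$-valent maps with a distinguished $2p$-gon boundary face, suitably normalized). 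Taking the ratio, the polynomial factors in $c$ cancel and one loses exactly a factor of $12^n$ from the missing $n$ vertices, producing the limiting constant in the statement.

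The main obstacle I expect is the bookkeeping in the second step: disentangling the contribution from the boundary structure of the tangle region and verifying that the exterior $4$-valent map with boundary is counted by exactly the Schaeffer-type formula with the stated constant $\frac{(3p)!}{9 \cdot p!(2p-1)!}\left(\tfrac{2}{3}\right)^{p-2}$. Care is needed because the boundary of the tangle need not be a simple closed curve in the ambient map, and one must treat the $2p$ half-edges with their cyclic order and the root placement consistently with how Schaeffer's bijection roots maps with boundary. Establishing the $O(cb^{-c})$ error requires showing that configurations where the candidate $T$-region overlaps itself or the root in a pathological way are exponentially suppressed, which I would handle by a standard singularity analysis on the generating functions, using that the dominant singularity of the relevant map series lies strictly inside the unit disk after rescaling by $12$.
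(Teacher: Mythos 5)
Your high-level architecture (reroot to reduce to a single-root probability, then cut-and-glue along the tangle boundary) matches the paper's, but two of your steps have real gaps, and you have located the error term in the wrong place. First, the rerooting identity $\expect{N_{T,c}} = 4c\cdot\Pr[T\hookrightarrow L]$ is not free: the model samples uniformly from \emph{rooted} maps, which is not the same as sampling an unrooted map and then a uniform corner, because symmetric maps have fewer than $4c$ distinct rootings. This is precisely where the paper's $O(cb^{-c})$ comes from: it conditions on the equivalence classes of unrooted diagrams, uses $\condexpect{R_K}{[A_i]} = \frac{1}{4c}N_K(A_i)$ only for \emph{asymmetric} classes, and invokes Richmond--Wormald to show the symmetric classes contribute an exponentially small correction. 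Your proposal never mentions map symmetries, and instead attributes the $O(cb^{-c})$ to ``non-embedded tangles'' and overlapping $T$-regions to be controlled by singularity analysis. That error source does not exist in the paper's setup: the identity $\Pr[K\hookrightarrow Q] = |Q(N-n,p)|/|Q(N)|$ is \emph{exact}, because the tangle is by definition dual to a quadrangulation with self-avoiding boundary, which makes the cut-and-glue an honest bijection with no pathological configurations to exclude. So your proposed error analysis targets a non-issue while omitting the argument that actually produces the stated error term.

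Second, your computation of the limiting constant is not a derivation. The factor $\frac{(3p)!}{9\cdot p!(2p-1)!}\left(\frac{2}{3}\right)^{p-2}12^{-n}$ does not come from ``Schaeffer's enumeration'' of $4$-valent maps with boundary; the paper works in the dual picture and uses Tutte's count $|Q(N)| = \frac{2\cdot 3^N(2N)!}{N!(N+2)!}$ together with the Bouttier--Guitter formula for rooted quadrangulations with self-avoiding boundary, $|Q(n,p)| = 3^{n-p}\frac{(3p)!}{p!(2p-1)!}\frac{(2n+p-1)!}{(n-p+1)!(n+2p)!}$, and then applies Stirling to the ratio $|Q(N-n,p)|/|Q(N)|$. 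Asserting that the boundary count ``carries an extra combinatorial factor of the form'' equal to the answer is assuming what you need to prove. To repair the proposal: cite or reprove Proposition~\ref{prop:nonalternatingprob} for the exact value and limit of $P(n,p,c)$, and replace your rerooting one-liner with the conditional-expectation argument over unrooted equivalence classes plus the Richmond--Wormald asymmetry estimate (or, alternatively, prove carefully that with rootings counted as oriented edges the identity is exact even for symmetric maps, in which case you should explain why the error term is still present in the statement).
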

See Section~\ref{nonalternatingdiagrams} and Proposition~\ref{prop:nonalternatingprob} for the definition and asymptotics of $P(n,p,c)$. Having this asymptotic behavior immediately yields the following information about a large random link, by applying Theorem~\ref{thm:genericallynothyperbolic} to various tangles.
\begin{cor}
\label{cor:genericallynothyperbolic}
For a random link diagram, the following quantities have expectation which is asymptotically bounded above and below by increasing linear functions of the number of crossings:
\begin{enumerate}
\item The number of link components
\item The number of pieces in the connect sum decomposition
\item The number of pieces of the JSJ decomposition of the exterior
\item The Gromov norm of the exterior
\item The crossing number
\end{enumerate}
\end{cor}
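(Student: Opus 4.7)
The plan is to prove matching linear upper and lower bounds on the expectation of each quantity. The upper bounds are elementary and independent of the random model; the lower bounds are all obtained in the same way --- pick, for each quantity, a small rooted tangle whose presence in the diagram forces a definite contribution, and apply Theorem~\ref{thm:genericallynothyperbolic} to get a linear expected number of such contributions.

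For the upper bounds, I would observe that each quantity is at most a linear function of $c$ for any $c$-crossing diagram whatsoever. The crossing number is trivially at most $c$, and the number of components is bounded by smoothing every crossing. The Gromov norm is at most $v_8 c / v_3$ via the Dylan Thurston ideal octahedral decomposition already invoked in the discussion of Theorem~\ref{thm:expectedvolume}. For the number of connect sum summands and the number of JSJ pieces, I would use that each such piece requires an essential $2$-sphere or torus meeting the diagram in a region of bounded combinatorial complexity, so that only $O(c)$ can fit disjointly. Each bound passes to expectations.

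For the lower bounds, the tangles I would use are: for the number of components, a tangle containing an interior closed loop (creating a split unknot component); for the number of prime summands and for the crossing number, a tangle tying a local trefoil into a single strand (producing a nontrivial local summand, with crossing number contribution bounded below linearly in the number of summands by Lackenby's crossing-number additivity bound); for the number of JSJ pieces, a tangle whose presence forces an essential torus in the complement, such as a single strand carrying a Whitehead-double pattern; and for the Gromov norm, a tangle tying a local figure-eight knot into a strand, contributing the positive constant $\|S^3 \setminus 4_1\|$ by additivity of simplicial volume under gluing along incompressible tori. In each case, Theorem~\ref{thm:genericallynothyperbolic} gives an asymptotically linear expected count of rootings witnessing the tangle, and summing the corresponding contributions yields a linear lower bound on the expectation.

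The main technical step, and the place where care is required, is converting the count of witnessing rootings produced by Theorem~\ref{thm:genericallynothyperbolic} into a genuine lower bound on the quantity of interest. Two wrinkles appear: first, several rootings of $L$ may witness the same embedded copy of $T$, which costs only a multiplicative constant; second, one must check that distinct embedded copies of $T$ really contribute independently. The latter amounts, in each case, to a standard $3$-manifold fact --- uniqueness of the prime and JSJ decompositions, and additivity of simplicial volume across incompressible tori --- applied once the tangles above are fixed. With those topological ingredients in hand, the linear lower bounds follow directly from Theorem~\ref{thm:genericallynothyperbolic}.
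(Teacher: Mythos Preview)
Your overall strategy matches the paper's: deterministic linear upper bounds plus Theorem~\ref{thm:genericallynothyperbolic} applied to well-chosen tangles for the lower bounds. Two points deserve comment.

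First, your upper bound for the number of JSJ pieces is not quite right as stated. The JSJ tori live in the link exterior and need not ``meet the diagram in a region of bounded combinatorial complexity'' in any obvious sense; there is no a priori reason they are visible from the planar picture. The paper instead triangulates the exterior with $O(c)$ tetrahedra and invokes the result (Hass) that a triangulated irreducible $3$-manifold contains only linearly many disjoint, pairwise nonparallel, incompressible surfaces in the number of tetrahedra. The same argument then covers the connect-sum count, since the separating spheres correspond to incompressible tori in the exterior. Your octahedral bound for the Gromov norm is fine and is morally the same as the paper's tetrahedral count.

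Second, for the lower bound on the crossing number you invoke Lackenby's quasi-additivity of crossing number under connect sum, whereas the paper takes a slightly different route: having already established a linear lower bound on the expected Gromov norm, it simply notes that the Gromov norm is bounded above by a linear function of the minimal crossing number, so the crossing number must be linear as well. Both arguments are valid; yours has the virtue of being self-contained once you have the linear count of trefoil summands, while the paper's avoids importing an additional theorem by reusing item~(4). Your remarks about converting rooting counts to embedded-copy counts, and about independence of contributions via uniqueness of the decompositions, are well taken and are glossed over in the paper.
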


Notice that this means generically a random link diagram in our model is highly composite, hence nonhyperbolic. A similar result is shown in \cite{chapman}; the analogous theorem there states loosely that diagrams which do not contain a given local picture linearly often are exponentially rare among all link diagrams. He also shows this in the case of random knot diagrams, whereas we only consider link diagrams, which are simpler to work worth. However, here we can compute explicitly the probability of seeing a given tangle occuring.

We have also included the result of some computer experiments with the \emph{Spherogram} module in \emph{SnapPy} \cite{snappy} implementing this model.  In particular, we find that the hyperbolic volume appears to be quite strongly linear with respect to the crossing number for alternating links. For a given crossing number, we examine the (normalized) distribution of volumes, and present some evidence that it converges to a limiting distribution. However, this distribution is not normal. Finally, to see if the number of bigons and larger faces affects the volume on average, we generated large amounts of data relating these quantities, and find that in some sense, diagrams with larger faces tend to have less volume. The result in \cite{lackenby} suggests that as the twist number decreases, or alternatively, the number of bigons increases, the volume should decrease. What this data suggests that this remains true if we fix the number of bigons and vary the number of triangles, and so on.

\subsection*{Acknowledgments}  
The author was partially supported by NSF grant DMS-1510204 and Campus Research Board grant RB15127. The author is especially indebted to his advisor, Nathan Dunfield, for suggesting the problem, and for his wonderful help and insight.
The author would also like to thank Stavros Garoufalidis for pointing out the improved upper bound, and Dylan Thurston for his helpful comments.

\section{Outline of Paper}

In Section \ref{amodelforrandomlinks}, we describe the model we are using and set up our notation. In Section \ref{alternatinglinkdiagrams}, we show that the expected hyperbolic volume of a random alternating link in this model is linear in the number of crossings. We also present the results of numerical experiments about the behavior of the hyperbolic volume. In Section \ref{nonalternatingdiagrams}, we show that a random nonalternating diagram is typically composite, and compute the probability of seeing a given tangle as a local picture in a random diagram. Finally, in Section \ref{largerfaces}, we present the preliminary data showing the tendency of diagrams with larger faces to have higher hyperbolic volume, all else being equal.

\section{A Model for Random Links} \label{amodelforrandomlinks}

One natural way to randomly sample links is through link diagrams. A \emph{rooted planar map} is an equivalence class of embeddings of a planar graph into the plane, where the equivalence is given by orientation-preserving homeomorphisms of the sphere, and where one oriented edge called the \emph{root} is specified. The homeomorphism here must take root to root. Loops and multiple edges are allowed. Note that a \emph{4-valent} planar map (all vertices having valence 4) is a projection of a link in $\mathbb{R}^3$ into the plane; to specify a link from a 4-valent planar map, we think of the vertices as crossings and specify which strand goes over which. We will consider the set of all rooted 4-valent planar maps with $n$ vertices, which we denote $Q(n)$. The rooting here makes the problem of enumeration much simpler, as a rooted map cannot have any automorphisms preserving the root. We will also refer sometimes to the \emph{root vertex}, the source vertex of the root, and the \emph{root face}, the face on the right side of the root, when pointing in the direction of the root. 

 To randomly sample link diagrams of size $n$, we first sample uniformly at random from $Q(n)$, and then flip a coin at each vertex to determine which strand crosses over or under.  We can work with this model computationally due to the work of Schaeffer, who describes in \cite{schaeffer97} an algorithm to sample uniformly from $Q(n)$ in linear time, and whose software \emph{planarmap} \cite{planarmap} implementing this algorithm is used by \emph{SnapPy}.

If we wish to restrict to alternating links, then from a given planar map in $Q(n)$, there are two choices of alternating link diagrams whose ``shadows'' in the plane are that map. However, since the diagrams are rooted, we can fix a choice by stipulating that the root edge goes over-to-under.  Instead of working with $Q(n)$, which has diagrams which represent non-prime links, we will consider instead the sets
\[SQ(n)  =  \{\mbox{3-edge-connected, 4-valent, rooted, planar maps}\} \] 
and
\[AD(n) =  \{\mbox{prime, reduced, alternating, rooted link diagrams}\} \]

By a \emph{prime} diagram, we mean that the link diagram is not an ``obvious'' connect sum of two other diagrams, and by \emph{reduced}, we mean a diagram that has no loops --- no crossings that could immediately be removed with a type I Reidemeister move.  These sets are easily seen to be in bijection by our convention for choosing over/under data above; from now on, we will work exclusively with $SQ(n)$, and not explicitly mention the bijection. In \cite{menasco84}, it is shown that the alternating link diagram corresponding to a 3-edge-connected, 4-valent, rooted planar map is the diagram of a prime, alternating, non-split, link.  Additionally, such a link is either a torus link or hyperbolic.

\section{Alternating Link Diagrams} \label{alternatinglinkdiagrams}

\subsection{Enumeration and Sampling} \label{enumerationandsampling}
Various classes of planar maps were enumerated in a series of papers by Tutte and Brown \cite{tuttecensus, tutteenum}.  The class we are interested in was enumerated by Brown in \cite{brown62}, though in slightly disguised form.  The maps enumerated are \emph{nonseparable rooted planar maps with n edges} --- planar maps without a loop or \emph{cut-vertex}. A cut-vertex is a vertex $V$ which partitions the edges of the map into two sets which only share $V$ as a vertex.  This set is in bijection with $SQ(n)$ through the medial bijection (see Figure \ref{fig:medial}).

\begin{figure}[h]
\centering

\begin{subfigure}{.5\textwidth}
\centering
\includegraphics[width=.9\linewidth]{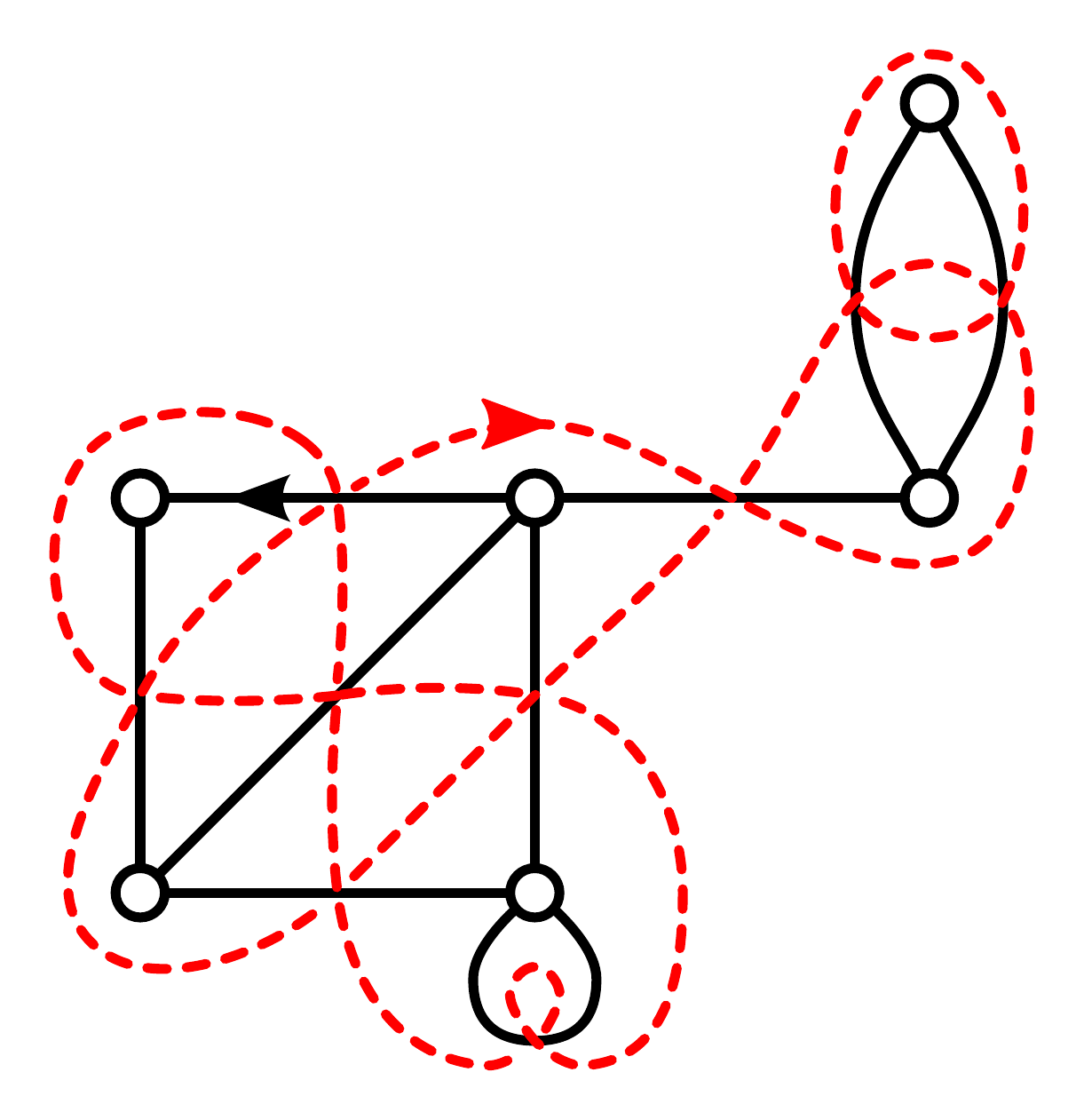}
\end{subfigure}%
\begin{subfigure}{.5\textwidth}
\centering
\includegraphics[width=.9\linewidth]{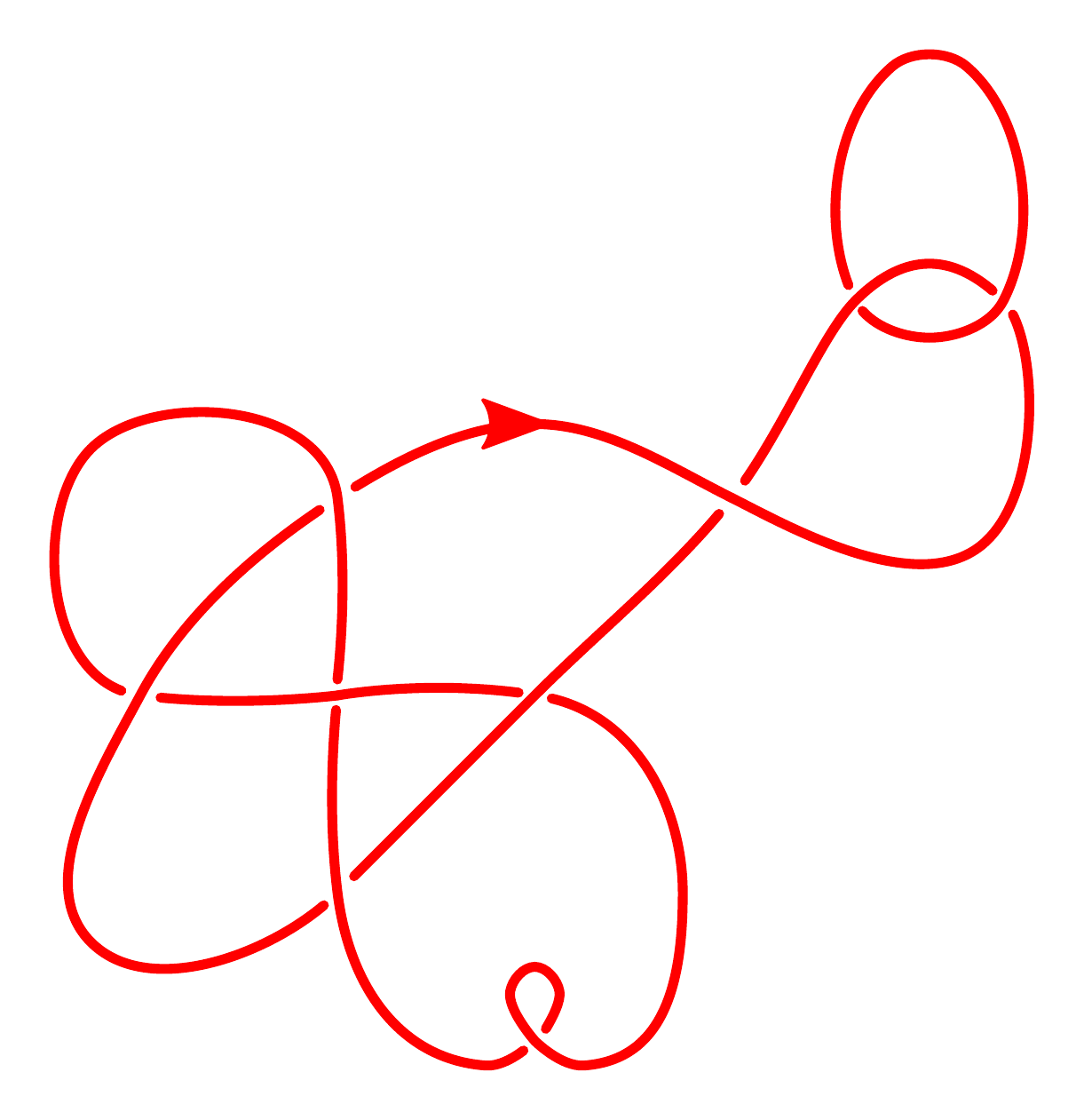}
\end{subfigure}
\caption{The medial bijection: to every planar map we can associate a 4-valent planar map by creating a vertex at the midpoint of each edge and connecting around each face.  The root of the new map is taken by convention to be the outward edge 2nd from the original clockwise, as shown. The example here is \emph{separable}: for example, the root vertex is a cut vertex. On the right, we have the resulting alternating link diagram.}
\label{fig:medial}
\end{figure}

Note that a cut vertex corresponds exactly to a place where the corresponding link diagram (under the medial bijection) can be separated into two halves, connected by two strands. The convention to transfer the root to the 4-valent map is as in Figure \ref{fig:medial}. One should be careful above the inverse of the medial bijection --- it appears that, around a vertex of a 4-valent map, there are four choices of oriented edge coming out, hence there should be four associated planar maps after applying the inverse. There appear to only be two, since we have two orientations of the edge that corresponds to that vertex in the planar map. However, there are two different (unrooted) planar maps that give the same (unrooted) 4-valent map after the medial bijection, a map and its dual, and each of these will have two rootings of a given edge. These four possibilities give the four rootings of the corresponding vertex after the medial bijection.

Jacquard and Schaeffer in \cite{jacquardschaeffer} describe an algorithm to sample efficiently from $SQ(n)$, so we can sample from prime alternating link diagrams of size $n$.  In our case, we wish to see that the volume indeed grows linearly in the size of the diagram, as one might expect from the following result from \cite{lackenby}:

\begin{thm}[Lackenby, Agol, Thurston 2004]
Let $D$ be a prime, alternating diagram of a hyperbolic link $K$.  Then
\begin{equation}
\label{lackenbyineq}
v_3(t(D)-2)/2 \leq \mbox{Vol}(S^3-K) < 10v_3(t(D)-1) 
\end{equation}
where $v_3 \approx 1.01494$ is the volume of a regular hyperbolic ideal tetrahedron.

Moreover, the upper bound is asymptotically sharp.
\end{thm}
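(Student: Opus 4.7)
The theorem combines two independent results: a lower bound due to Lackenby and an upper bound due to Agol and D.~Thurston (which appeared as an appendix to Lackenby's paper). The plan is to treat them separately and address asymptotic sharpness at the end.

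For the lower bound, I would start from Menasco's ideal polyhedral decomposition of the complement of a prime alternating link: cutting along the plane of projection yields two ideal polyhedra $P_+$ and $P_-$ whose combinatorics are dictated by $D$ and its checkerboard coloring. On a sufficiently fine ideal triangulation of $P_\pm$, I would construct an explicit \emph{angle structure} --- a positive assignment of dihedral angles in each tetrahedron, with the angles around any edge summing to $2\pi$ --- tailored to the twist-region combinatorics of $D$. The key input is the Casson--Rivin volume functional: the Lobachevsky volume evaluated on any angle structure is a lower bound for the actual hyperbolic volume. I would then arrange the angle structure so that each twist-equivalence class of crossings contributes $v_3/2$ to this sum, yielding the bound $v_3(t(D)-2)/2$; the $-2$ correction comes from two distinguished ``outer'' twist classes whose contribution is absorbed when one collapses bigons.

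For the upper bound, I would use the augmented-link construction. Given the $t(D)$ twist regions of $D$, form $L'$ by adjoining an unknotted crossing circle around each of them. The complement $S^3 - L'$ admits an explicit ideal polyhedral decomposition obtained by cutting along the projection plane and the crossing disks, and the pieces are nicely controlled ideal polyhedra whose total volume is bounded linearly in $t(D)$. Since $K$ is recovered from $L'$ by Dehn filling along the crossing-circle cusps, Thurston's hyperbolic Dehn filling theorem gives $\mathrm{Vol}(S^3 - K) \leq \mathrm{Vol}(S^3 - L')$. Carefully counting pieces after collapsing the bigons in each twist region (which do not add geometry) produces the linear upper bound with constant $10v_3$. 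Asymptotic sharpness then follows by producing a family of alternating diagrams in which every twist region contains only one or two crossings, so that the required Dehn fillings are close to trivial and the filled volume limits to the augmented-link volume; chain-link-like families provide natural candidates.

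The principal obstacle is the angle-structure construction for the lower bound: verifying strict positivity of every assigned dihedral angle throughout $P_\pm$, including near the boundary of a twist region where the polyhedral combinatorics degenerates, is delicate, and one must reconcile the bigon-collapse step with the precise definition of $t(D)$ without double-counting. For the upper bound, the main difficulty is choosing the polyhedral decomposition of $S^3 - L'$ carefully enough to extract the sharp constant $10v_3$ rather than a looser constant obtained from a naive octahedral decomposition, and then estimating the Dehn-filling volume drop quantitatively rather than merely qualitatively.
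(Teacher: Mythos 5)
This statement is not proved in the paper at all: it is quoted verbatim from Lackenby's article (with the appendix by Agol and D.~Thurston) and used as a black box, so there is no internal argument to compare yours against. Measured instead against the actual proof in the cited source, your upper-bound sketch is essentially faithful: one augments each twist region with a crossing circle, triangulates the augmented link complement explicitly with at most $10(t(D)-1)$ ideal tetrahedra, recovers $K$ by Dehn filling the crossing-circle cusps (which can only decrease volume), and obtains asymptotic sharpness from chain-link-type families. That is the content of the Agol--Thurston appendix.

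Your lower-bound sketch, however, is not the argument Lackenby gives, and as written it has a real gap. Lackenby does not build angle structures on the Menasco decomposition; he proves that the two checkerboard surfaces of a prime alternating diagram are essential, computes the Euler characteristic of the \emph{guts} of the complement cut along them in terms of the twist number, and then invokes Agol's inequality $\mathrm{Vol}(M) \geq -2v_3\,\chi\bigl(\mathrm{guts}(M\backslash\backslash S)\bigr)$ to convert that combinatorial count into the bound $v_3(t(D)-2)/2$. In your proposal, the entire weight of the lower bound rests on the assertion that an angle structure can be ``arranged so that each twist-equivalence class of crossings contributes $v_3/2$,'' which is exactly the hard step and is not justified; moreover, the principle you need --- that the Lobachevsky volume of an \emph{arbitrary} angle structure bounds the hyperbolic volume from below --- is a nontrivial theorem (due to Futer and Gu\'eritaud, postdating Lackenby's paper) and is not part of the Casson--Rivin critical-point theory you cite. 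Angle-structure methods do yield volume lower bounds for alternating links, but they are not known to reproduce the constant $v_3/2$ per twist region, so this half of your proposal should be regarded as an unproved alternative strategy rather than a reconstruction of the theorem's proof.
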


Here $t(D)$ represents the \emph{twist number} of a diagram, which is defined in \cite{lackenby} as the number of twist regions. A \emph{twist region} is a maximal chains of bigons, arranged end to end, or a crossing not adjacent to any bigon. An example is given in Figure \ref{fig:twistnumber}.

\begin{figure}
\centering
\includegraphics[scale=.4]{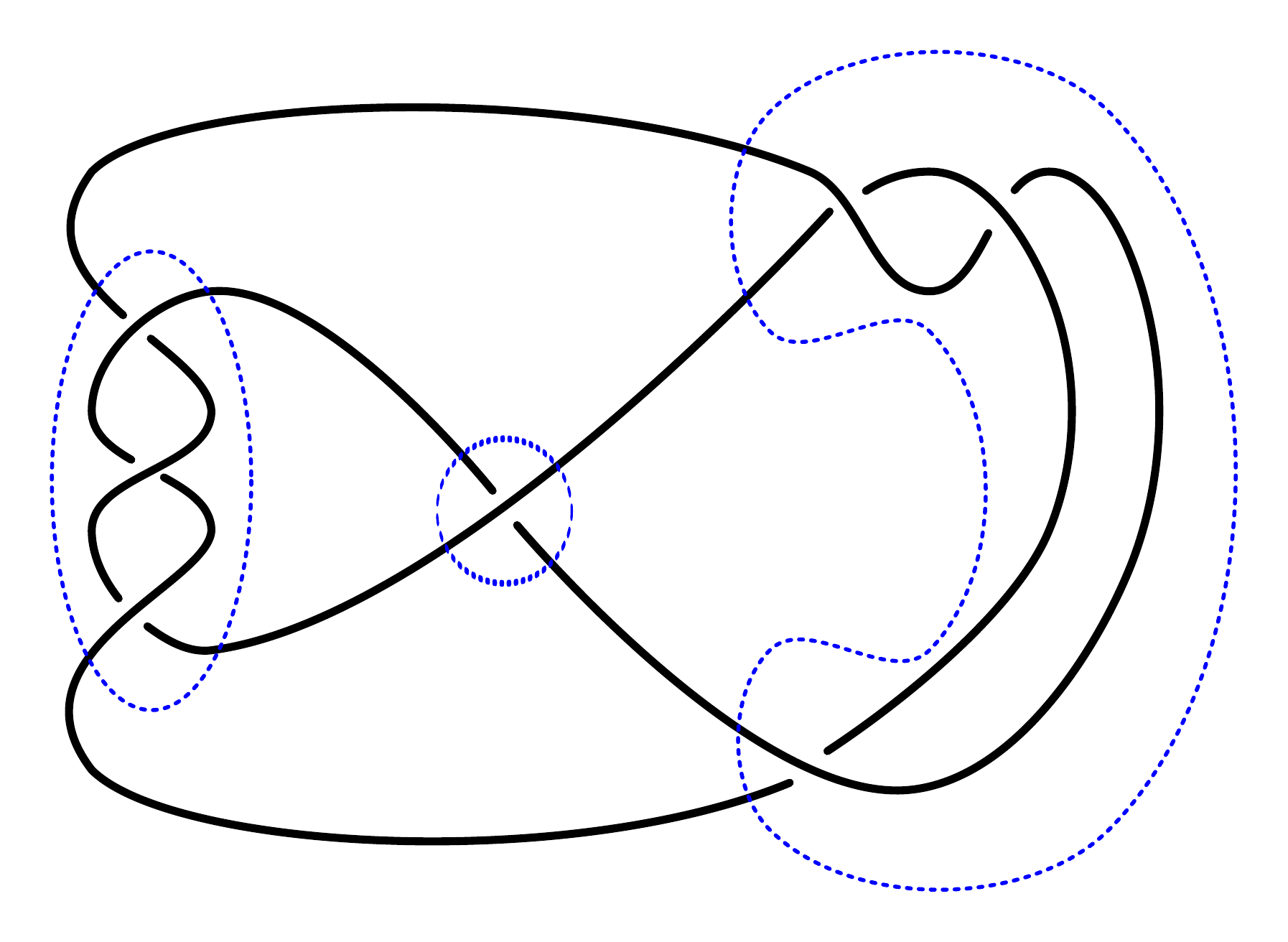}
\caption{This diagram has the 3 twist regions outlined.  Note that there are 7 crossings, and 4 bigons in the complement.}
\label{fig:twistnumber}
\end{figure}

Note that for each bigon chain, the number of crossings is one more than the number of bigons.  Since the twist regions partition the crossings and the bigons of $D$,
\begin{equation}
 \begin{split}
\label{twistsandbigons}
t(D) & = \sum_{\mbox{twist regions $T$}} 1 \\
     & = \sum_{\mbox{twist regions $T$}} [(\mbox{\# crossings in $T$}) - (\mbox{\# bigons in $T$})] \\
     & = (\mbox{\# crossings in $D$}) - (\mbox{\# bigons in $D$})
 \end{split}
\end{equation}

That is, the twist number of a diagram is simply the number of crossings minus the number of bigons in the complement. There is one corner case here where the formula doesn't hold: if the bigon chain connects to itself to make a loop, we then have a connected component of the link diagram which is the standard diagram of the torus link $T(2,n)$.  Since our diagram is assumed connected, there is only one such unrooted diagram to consider, which has two rootings in $SQ(n)$, as shown in Figure \ref{fig:toruslink}.  In this case, the twist number $t(D)$ is just 1.

\begin{figure}[b]
\centering
\includegraphics[scale=.8]{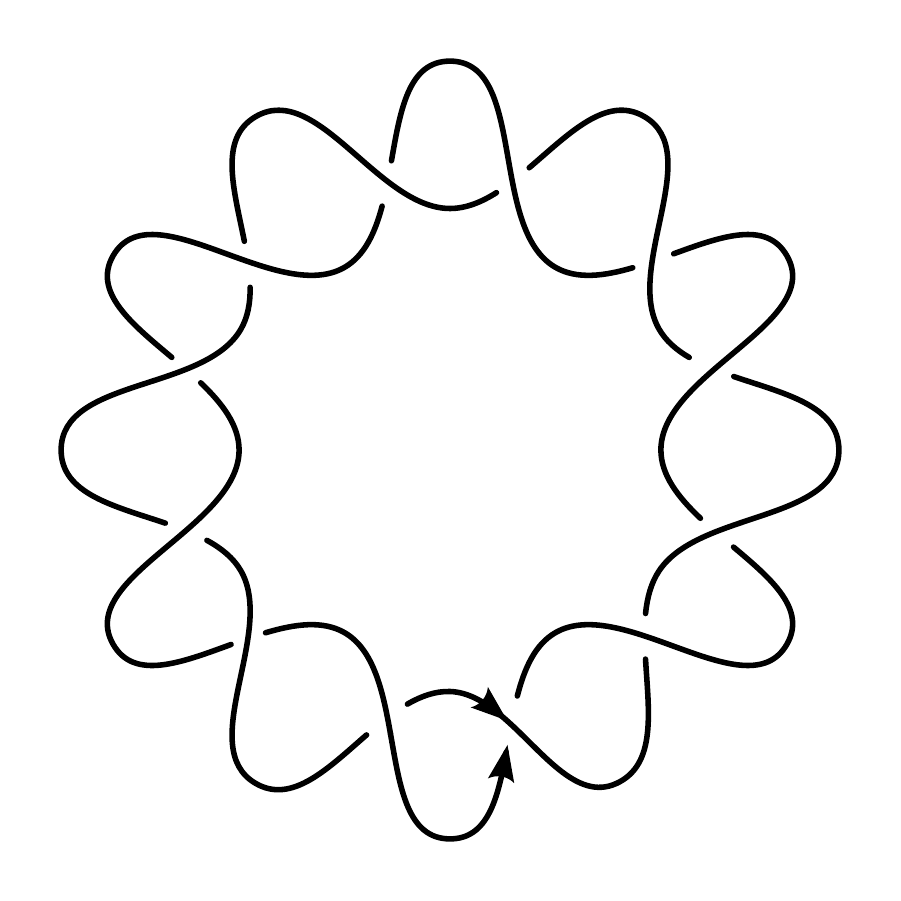}
\caption{The torus link $T(2, 12)$. The root edge will either have a bigon to its right, or a larger face. Those two cases, after homeomorphism of the sphere, can be transformed to the two rootings shown above.}
\label{fig:toruslink}
\end{figure}

So, in order to get bounds on the expected volume in our model, we can simply find the expected number of bigons.  It is Brown's enumeration that allows us to do this. We first find the probability that a randomly chosen element of $SQ(n)$ will have a bigon as its root \emph{face}.


In the medial bijection between nonseparable planar maps and $SQ(n)$, a nonseparable planar map where the root vertex has valence $m$ corresponds to an element of $SQ(n)$ where the root face has $m$ sides.  Let $SQ(n,m)$ be the subset of all elements of $SQ(n)$ where the root face has $m$ sides. Then, we have the enumeration
\begin{thm}[Brown 1962]
\label{brownenumeration}
The number of nonseparable planar maps with $n$ edges and root valence $m$ is given by:
\[ |SQ(n,m)| = \left( \frac{m}{(2n-m)!} \right) \sum_{j=m}^{\min(n,2m)} \frac{(3m-2j-1)(2j-m)(j-2)!(3n-j-m-1)!}{(n-j)!(j-m)!(j-m+1)!(2m-j)!} \]
where $n \geq m \geq 2$.

The number of nonseparable planar maps with $n$ edges (and no other restriction) is 
\[ |SQ(n)| = \frac{2(3n-3)!}{n!(2n-1)!}\]
\end{thm}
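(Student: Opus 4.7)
The plan is to follow Brown's original proof by the method of generating functions and the quadratic (or ``catalytic variable'') method of Brown--Tutte. Via the medial bijection explained earlier, counting elements of $SQ(n,m)$ is equivalent to counting rooted nonseparable planar maps with $n$ edges whose root vertex has valence $m$. So I would set up the bivariate generating function
\[ M(x,y) = \sum_{n \geq 2} \sum_{m \geq 2} |SQ(n,m)|\, x^n y^m, \]
where $x$ marks edges and $y$ marks the root-vertex valence. The first step is to obtain a functional equation for $M$ by the classical root-edge decomposition for nonseparable maps: erasing the root edge either produces a smaller nonseparable map (with controlled root data) or splits the map, along its root vertex, into two nonseparable submaps whose generating functions combine multiplicatively. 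This decomposition yields an equation that is quadratic in $M(x,y)$ and in which the ``catalytic'' unknown $M(x,1)$ appears linearly, accompanied by a rational factor of the form $\frac{yM(x,y) - M(x,1)}{y-1}$.

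The second step is to solve this equation by the quadratic method. Since the equation has the shape $P(M,x,y) = 0$ with $P$ quadratic in $M$, analyticity of $M$ in $x$ forces the discriminant of $P$ (viewed as a polynomial in $M$) to vanish simultaneously with its $y$-derivative along a curve $y = Y(x)$. These two vanishing conditions determine both $Y(x)$ and $M(x,1)$, and they produce a rational parametrization of the solution in an auxiliary variable $z=z(x)$ satisfying an algebraic equation of the form $x = z\,R(z)$. The third step is then coefficient extraction: Lagrange inversion applied to the parametric expression for $M(x,y)$ yields a single explicit sum for $[x^n y^m] M$, and matching indices recovers exactly the inner sum in the statement, with the ranges $m \le j \le \min(n,2m)$ appearing as the region where the factorials in the Lagrange-inversion formula remain nonnegative. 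The marginal count $|SQ(n)| = \frac{2(3n-3)!}{n!(2n-1)!}$ is obtained either by summing the closed form over $m$, or more cleanly by setting $y=1$ in the parametric expression for $M$ and applying a standard Vandermonde/hypergeometric simplification.

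The main obstacle, as always with the Brown--Tutte approach, is the quadratic-method step: correctly identifying the algebraic curve on which the discriminant vanishes, and verifying that the resulting parametric formula is the unique power-series solution of the functional equation. The combinatorial root-edge decomposition and the final Lagrange inversion are relatively routine once the parametric solution is in hand; rather, it is the bookkeeping required to pass from the raw quadratic equation to the clean form $x = z\,R(z)$ that requires care and where Brown's original argument must be followed essentially line-by-line.
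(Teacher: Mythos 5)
The paper does not actually prove this statement: it is quoted from Brown \cite{brown62} (with the identification of root-vertex valence in nonseparable maps with root-face size in $SQ(n,m)$ via the medial bijection), so there is no internal proof to compare against. Your proposal is an outline of Brown's original argument rather than a proof: none of the three steps is carried out --- no functional equation is actually written down, the discriminant analysis and parametrization are not computed, and the Lagrange inversion that would produce the stated sum (and the marginal count $|SQ(n)|$) is never performed. You explicitly defer the central step to Brown ``line-by-line,'' so as a standalone argument this names the technique without establishing the formula.

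Moreover, the one combinatorial step you do describe concretely is not right as stated. Deleting the root edge of a nonseparable map does not yield ``either a smaller nonseparable map or two nonseparable submaps'': in general the deletion produces a map whose block decomposition is a chain of arbitrarily many nonseparable pieces strung between the two endpoints of the deleted edge. This is precisely why Brown's functional equation for nonseparable maps involves substituting the catalytic series into itself (a composition of generating functions) rather than the quadratic-plus-discrete-derivative shape with the term $\frac{yM(x,y)-M(x,1)}{y-1}$ that you quote, which is Tutte's equation for \emph{all} rooted planar maps. Starting from your decomposition one would derive the wrong functional equation and hence the wrong counting formula. If the goal is to reprove Brown's theorem rather than cite it, the chain-of-blocks decomposition and the resulting substitution equation are the first things that must be set up correctly; otherwise the appropriate move --- and the one the paper takes --- is simply to cite \cite{brown62}.
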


Since we select uniformly from all diagrams, the probability that the root face has size $m$ is then given by 
\[P(n,m) = \frac{|SQ(n,m)|}{|SQ(n)|} \]

We will use this to compute the expected number of $m$-gons in a diagram chosen in our model.  Immediately, this gives us the following fact.

\begin{lem}
\label{fmexpectation}
Let $F_m$ be the random variable defined on $SQ(n)$ with the uniform probability given by
\[ F_m(D) = \begin{cases}
    1/m & \mbox{if root face is size $m$} \\
    0 & \mbox{otherwise}
   \end{cases}
 \] 
Then,
\[ \expect{F_m} = \frac{1}{m} \frac{|SQ(n,m)|}{|SQ(n)|} = \frac{1}{m} P(n,m)  \]
\end{lem}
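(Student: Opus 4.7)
The plan is to prove this directly from the definition of expectation under the uniform distribution on $SQ(n)$. Since $F_m$ takes only two values ($1/m$ or $0$) and is constant on the event that the root face has size $m$, the computation reduces immediately to counting diagrams in that event.

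First, I would expand
\[ \expect{F_m} = \sum_{D \in SQ(n)} \frac{1}{|SQ(n)|} F_m(D). \]
Next, I would split the sum according to the root face size: the only nonzero contributions come from $D \in SQ(n,m)$, where $F_m(D) = 1/m$. Therefore
\[ \expect{F_m} = \frac{1}{|SQ(n)|} \sum_{D \in SQ(n,m)} \frac{1}{m} = \frac{1}{m} \cdot \frac{|SQ(n,m)|}{|SQ(n)|} = \frac{1}{m} P(n,m), \]
which is the claimed identity.

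There is really no obstacle here — the lemma is simply a repackaging of the definition, and the choice of the weight $1/m$ rather than $1$ is presumably motivated by a later double-counting argument (each $m$-gon face will be counted $m$ times when summing over possible rootings of its boundary edges). The only thing worth being careful about is that $SQ(n,m)$ is well-defined as a subset of $SQ(n)$, which was established in the preceding discussion through the medial bijection with nonseparable rooted planar maps having root vertex of valence $m$. Given that, the computation above is complete.
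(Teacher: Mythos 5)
Your computation is correct and is exactly the argument the paper has in mind: the lemma is stated as an immediate consequence of the definition of $P(n,m)$ as the fraction $|SQ(n,m)|/|SQ(n)|$, and your direct expansion of the uniform expectation makes that explicit. Your remark about the $1/m$ weight anticipating the later double-counting over the $m$ rootings of each $m$-gon (Lemma~\ref{fmandnm}) is also on target.
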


In order to relate this to the number of $m$-gons in a diagram, we partition $SQ(n)$ into the equivalence classes of \emph{unrooted} diagrams.  That is, we group together all rooted diagrams which are different rootings of the same unrooted diagram. The useful and generic case here is when the underlying unrooted diagram has no nontrivial automorphisms --- orientation preserving homeomorphisms of the sphere taking the diagram to itself which actually permute the edges. We call such diagrams \emph{asymmetric}. Equivalently, a diagram is \emph{asymmetric} if there are exactly $4n$ inequivalent rootings of the unrooted diagram, one for each oriented edge. We denote the equivalence class of $D$ under this equivalence relation by $[D]$, and the number of $m$-gons in $D$ by $N_m(D)$.

To relate $F_m$ to our desired $N_m$, we have the following simple lemma.
\begin{lem}
\label{fmandnm}
Let $D \in SQ(n)$ be asymmetric, and $N_m$ and $F_m$ as above. Then, the conditional expectation over the equivalence class $[D]$ is
\[ \condexpect{F_m}{ [D]}  = \frac{1}{4n} N_m(D) \]
\end{lem}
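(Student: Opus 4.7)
The plan is to unpack the definition of conditional expectation and carry out a straightforward double-counting argument that relates rootings to faces.

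First I would note that by the definition of \emph{asymmetric}, the equivalence class $[D]$ consists of exactly $4n$ rooted diagrams, one for each oriented edge of the underlying unrooted diagram. Hence the conditional expectation of $F_m$ on $[D]$ is simply the average
\[ \condexpect{F_m}{[D]} = \frac{1}{4n} \sum_{D' \in [D]} F_m(D'). \]
Since $F_m(D')$ equals $1/m$ when the root face of $D'$ has $m$ sides and vanishes otherwise, this sum reduces to $(1/m)$ times the number of rootings $D' \in [D]$ whose root face is an $m$-gon.

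Next I would count those rootings geometrically. A rooting is an oriented edge, and the root face is the face immediately to the right of that oriented edge. Given a particular $m$-gon face $f$ of $D$, the oriented edges that have $f$ on their right are exactly the $m$ edges of the boundary of $f$, each taken with the orientation induced by traversing $\partial f$ in the appropriate direction. Thus each $m$-gon contributes exactly $m$ rootings whose root face it is, and distinct $m$-gons contribute disjoint sets of rootings. Summing over all $m$-gons gives $m \cdot N_m(D)$ rootings in $[D]$ with root face an $m$-gon.

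Putting these together yields
\[ \condexpect{F_m}{[D]} = \frac{1}{4n} \cdot \frac{1}{m} \cdot m\, N_m(D) = \frac{1}{4n} N_m(D), \]
as claimed. I do not expect any real obstacle here; the only subtle point is invoking asymmetry to ensure that the $4n$ oriented edges give $4n$ genuinely distinct rootings, which is precisely the definition recalled just before the lemma.
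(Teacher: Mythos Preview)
Your proof is correct and follows essentially the same approach as the paper: both average $F_m$ over the $4n$ rootings of the asymmetric diagram and observe that each $m$-gon face contributes exactly $m$ rootings with value $1/m$, giving a total of $N_m(D)$. Your version is slightly more explicit about the double-counting, but the argument is the same.
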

\begin{proof}
If $D$ is asymmetric, then we are computing the average over all rootings $r$ of the underlying unrooted diagram, of which there are $4n$.  Calling $D_r$ the diagram in $[D]$ with root $r$, then we have
\[ \condexpect{F_m}{[D]} = \frac{1}{4n} \sum_r{F_m(D_r)} \]
For each root $r$, if the root face (the face to the right of $r$) is not an $m$-gon, then we get zero.  Otherwise, we get a contribution of $1/m$, which will occur for all $m$ edges around that face.  So, the sum gives $N_m(D)$, as desired.
\end{proof}

In order to now get estimates on the expectation of $N_m$, we need to know that ``most'' diagrams are asymmetric. Fortunately, this is a well studied problem; in particular, it is known that the proportion of nonseparable planar maps which are symmetric is exponentially small \cite{richmondwormald}. With these facts, we can complete the computation of the expectation of $N_m$.

\begin{thm}
\label{expectedbigons}
Let $N_m$ be the random variable on $SQ(n)$ with the uniform probability measure defined as above.  Then, the expectation of $N_m$ is given by
\[ \expect{N_m} = \frac{4n}{m} P(n,m) + O(na^n) \]
for some $a \in (0,1)$.
\end{thm}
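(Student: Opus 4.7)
The plan is to start from a double-counting identity relating $\expect{N_m}$ to $\expect{F_m}$, partitioned over unrooted diagrams, and then to absorb the discrepancy on symmetric classes into an exponentially small error term using the cited bound from \cite{richmondwormald}.

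First, since $N_m(D)$ depends only on the underlying unrooted diagram, I would write
\[
\expect{N_m} \;=\; \frac{1}{|SQ(n)|}\sum_{[D]} |[D]|\,N_m(D),
\qquad
\expect{F_m} \;=\; \frac{1}{|SQ(n)|}\sum_{[D]} |[D]|\,\condexpect{F_m}{[D]},
\]
where the sums run over equivalence classes of unrooted diagrams. On asymmetric classes Lemma~\ref{fmandnm} gives $\condexpect{F_m}{[D]} = N_m(D)/(4n)$ and $|[D]| = 4n$, so those terms contribute exactly $N_m(D)/|SQ(n)|$ to $\expect{F_m}$ and $4n \cdot N_m(D)/|SQ(n)|$ to $\expect{N_m}$. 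Thus on the asymmetric part the two expectations are related by the clean factor $4n$, and the proof reduces to bounding the symmetric contribution.

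Next I would bound the symmetric part. For any $D \in SQ(n)$, Euler's formula applied to a 4-valent map with $n$ vertices gives $n+2$ faces, so $N_m(D) \le n+2$ and $|[D]| \le 4n$. By \cite{richmondwormald}, the proportion of nonseparable planar maps that admit a nontrivial automorphism is $O(b^n)$ for some $b \in (0,1)$; transported across the medial bijection this yields $|\{[D] : [D] \text{ symmetric}\}|/|SQ(n)| = O(b^n)$. Hence the contribution of the symmetric classes to both $\expect{N_m}$ and to $4n\expect{F_m}$ is $O(n^2 b^n)$, which can be absorbed into $O(na^n)$ for any $a \in (b,1)$.

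Combining the two displays and subtracting yields
\[
\expect{N_m} \;=\; 4n\,\expect{F_m} \;+\; O(na^n),
\]
and inserting $\expect{F_m} = \tfrac{1}{m}P(n,m)$ from Lemma~\ref{fmexpectation} gives the stated formula. The only real subtlety is verifying that the symmetric diagrams truly contribute an exponentially small error after accounting for the extra factor $4n$ coming from the rooting count; this is where I would be most careful, but since $N_m$ and the rooting multiplicity are both polynomially bounded in $n$ while the symmetric proportion decays exponentially, the bookkeeping is routine.
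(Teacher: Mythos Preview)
Your proposal is correct and follows essentially the same route as the paper: partition $SQ(n)$ into asymmetric classes (where Lemma~\ref{fmandnm} gives the exact factor $4n$ between $N_m$ and $F_m$) and the symmetric remainder, then kill the symmetric contribution using the exponential bound from \cite{richmondwormald} together with the polynomial bounds $N_m \le n+2$ and $|[D]|\le 4n$. The paper organizes the computation via the law of total expectation rather than a direct double count, and obtains the slightly tighter error $O(nb^n)$ instead of your $O(n^2 b^n)$, but both are absorbed into $O(na^n)$ and the argument is the same in substance.
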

\begin{proof}
Take representatives $A_i$, $i \in \{1, ..., k\}$, of each of the equivalence classes of asymmetric diagrams.  Then, by the law of total expectation, we can split the expectation of $N_m$ as follows:
\[\expect{N_m} = \sum_{i=1}^k\condexpect{N_m}{[A_i]}P([A_i]) + \condexpect{N_m}{S_n} P(S_n) \]
where $S_n$ is the set of all symmetric diagrams in $SQ(n)$.  The conditional expectation over the equivalence class $[A_i]$ of $N_m$ is $N_m(A_i)$, as $N_m$ is constant over the equivalence class, since the number of $m$-gons depends only on the unrooted diagram. So, we have
\[\expect{N_m} = \sum_{i=1}^kN_m(A_i)P([A_i]) + \condexpect{N_m}{S_n} P(S_n) \]
Subsituting in with Lemma \ref{fmandnm}, we have
\begin{equation}
\label{splitnm}
\expect{N_m} = \sum_{i=1}^k (4n \cdot \condexpect{F_m}{[A_i]})P([A_i]) + \condexpect{N_m}{S_n} P(S_n) 
\end{equation}
Now, we turn to the expectation of $F_m$, which is known by Lemma~\ref{fmexpectation} and related to $N_m$ by Lemma~\ref{fmandnm}. We decompose in the exact same way as with $N_m$:
\[\expect{F_m} = \sum_{i=1}^k\condexpect{F_m}{ [A_i]} P([A_i]) + \condexpect{F_m}{S_n} P(S_n) \]
Note that we get a similar term to before. Multiplying through by $4n$, and subtracting the rightmost term gives
\[ 4n \cdot \expect{F_m}-4n \cdot \condexpect{F_m}{S_n} P(S_n) = \sum_{i=1}^k 4n \cdot \condexpect{F_m }{ [A_i]} P([A_i])  \]
Now, we can substitute directly back into \ref{splitnm} above, and obtain
\[\expect{N_m} = (4n \cdot \expect{F_m}) - (4n \cdot \condexpect{F_m}{S_n} P(S_n)) + \condexpect{N_m}{S_n} P(S_n) \]

We know from \cite{richmondwormald} that the probability of selecting a symmetric diagram is exponentially small as $n \rightarrow \infty$, so $P(S_n) = O(a^n)$, $ 0 < a < 1$. The conditional expectation of $F_m$ over $S_n$ is bounded (at most 1), and the conditional expectation of $N_m$ is at most linear in $n$, as the highest it can be is bounded by the total number of faces, which is $n+2$ by a simple Euler characteristic argument. Hence, by Lemma \ref{fmexpectation} and these facts, we have the desired result of
\[ \expect{N_m} = \frac{4n}{m} P(n,m) + O(na^{n}) \]

\end{proof}

So, to find explicitly the expected value of $N_m$, it remains to get a formula for $P(n,m)$.  In fact, the only case we need, to compute the expected number of bigons, is $P(n,2)$, which we can compute explicitly and directly from the definitions.
\begin{lem}
The limiting behavior of $P(n,2)$ is given by
\[ P(n,2) = \frac{4}{27} + \frac{10}{27n}+O \left( \frac{1}{n^2} \right) \]
\end{lem}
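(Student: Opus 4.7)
The plan is to specialize Brown's formula (Theorem~\ref{brownenumeration}) at $m=2$ and simplify. For $n \geq 4$, the summation index $j$ in the enumeration of $|SQ(n,m)|$ ranges only over $\{2,3,4\}$, since $\min(n,2m) = 4$, leaving only three terms. The combinatorial factor $(3m-2j-1)(2j-m) = (5-2j)(2j-2)$ evaluates to $2,-4,-18$ at $j=2,3,4$, respectively, and the factor $(j-m)! = (j-2)!$ in the denominator cancels the $(j-2)!$ in the numerator. After these manipulations I expect
\[
|SQ(n,2)| = \frac{2}{(2n-2)!}\left[\frac{(3n-5)!}{(n-2)!} - \frac{2(3n-6)!}{(n-3)!} - \frac{3(3n-7)!}{(n-4)!}\right].
\]

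Next, I would divide by the closed form $|SQ(n)| = 2(3n-3)!/(n!(2n-1)!)$ from the second half of Theorem~\ref{brownenumeration}. The ratios $(3n-k)!/(3n-3)!$ and $n!/(n-\ell)!$ telescope to products of consecutive linear factors in $n$, and the factor $3n-3 = 3(n-1)$ cancels an $(n-1)$ arising from $n!/(n-\ell)!$ in each term, while $3n-6 = 3(n-2)$ in the third summand absorbs an $(n-2)$. Pulling out the common prefactor $n(2n-1)/(3(3n-4))$ from all three terms, I anticipate the bracketed expression reducing via $[(3n-5) - 2(n-2) - (n-3)]/(3n-5) = 2/(3n-5)$, leaving the closed form
\[
P(n,2) = \frac{2n(2n-1)}{3(3n-4)(3n-5)}.
\]

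From this compact rational expression the asymptotic expansion is routine: writing $P(n,2) = (4n^2 - 2n)/(27n^2 - 81n + 60)$, dividing top and bottom by $n^2$, and applying a geometric series in $1/n$ yields
\[
P(n,2) = \frac{4}{27} \cdot \frac{1 - 1/(2n)}{1 - 3/n + O(1/n^2)} = \frac{4}{27}\left(1 + \frac{5}{2n} + O(1/n^2)\right) = \frac{4}{27} + \frac{10}{27n} + O\!\left(\frac{1}{n^2}\right).
\]

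The main obstacle is purely bookkeeping: carefully tracking the sign of $(5-2j)$ through $j=2,3,4$ and managing the multitude of factorial cancellations when forming the ratio. Conceptually everything is elementary, and the key algebraic identity becomes transparent once the uniform prefactor $n(2n-1)/(3(3n-4))$ is recognized across all three summands, so the bracket collapses to the single simple ratio $2/(3n-5)$.
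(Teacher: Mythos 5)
Your proposal is correct and follows essentially the same route as the paper: specialize Brown's enumeration at $m=2$ (the three terms $j=2,3,4$ with coefficients $2,-4,-18$ check out and yield exactly the bracketed expression the paper records), divide by the closed form for $|SQ(n)|$, cancel factorials to the rational function $\frac{2n(2n-1)}{3(3n-4)(3n-5)}$, and expand at infinity to get $\frac{4}{27}+\frac{10}{27n}+O(1/n^2)$. The only cosmetic difference is that you simplify all the way to this compact closed form before expanding, whereas the paper leaves the ratio as a product of linear factors times $6/(n-3)$; both are the same computation.
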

\begin{proof}
By the enumeration of Brown in Theorem \ref{brownenumeration},
\[ |SQ(n,2)| = \frac{2}{(2n-2)!} \left( \frac{(3n-5)!}{(n-2)!} - 2\frac{(3n-6)!}{(n-3)!} -3\frac{(3n-7)!}{(n-4)!} \right) \]
From here, we can factor out the factorials
\[ |SQ(n,2)| = \frac{2}{(2n-2)!} \frac{(3n-7)!}{(n-4)!} \left( \frac{(3n-5)(3n-6)}{(n-2)(n-3)} - 2\frac{(3n-6)}{(n-3)} -3 \right) \]
Dividing now by $|SQ(n)|$ and simplifying the right factor, we can pair all the factorials so that we are left with just a rational function in $n$
\[ \frac{|SQ(n,2)|}{|SQ(n)|} = \frac{(2n-1)(n)(n-1)(n-2)(n-3)}{(3n-3)(3n-4)(3n-5)(3n-6)} \left(\frac{6}{n-3} \right)\]
So simplifying and expanding the first two terms of the Laurent series about $\infty$ gives
\[ P(n,2) = \frac{4}{27} + \frac{10}{27n} + O\left( \frac{1}{n^2} \right)\]
which is the formula claimed.
\end{proof}

This gives the expected twist number of a diagram is $t(D) \approx (1-8/27)n = 19n/27$ for large $n$. For larger $m$, we can numerically approximate with large values for $n$ the expected portion of $m$-gons.

\begin{center}
 \begin{tabular}{||c | c||} 
 \hline
 $m$ & $4/m \cdot P(1000,m)$  \\ [0.5ex] 
 \hline\hline
 2 & 0.2964445 \\ 
 \hline
 3 & 0.2415913  \\
 \hline
 4 & 0.1643246  \\
 \hline
 5 & 0.1068911  \\
 \hline
 6 & 0.0686226 \\ 
\hline
 7 & 0.0439052 \\ [1ex] 
 \hline
\end{tabular}
\end{center}

Now that we have asymptotics for the twist number, using the bounds in \eqref{lackenbyineq}, we can get the asymptotic estimates on the expectation of the volume, and prove the first theorem of the introduction, repeated here.
\theoremstyle{plain}
\newtheorem*{thm:expectedvolume}{Theorem \ref{thm:expectedvolume}}
\begin{thm:expectedvolume}
Let $\Vol$ be the random variable which returns the hyperbolic volume of a random alternating link diagram with $n$ crossings. The expected hyperbolic volume is bounded by
\begin{equation*}
\left( \frac{19v_3}{54} \right) n + \left( \frac{10v_3}{27}-2 \right) +O\left( \frac{1}{n} \right) \leq \expect{\Vol}  <  \left( \frac{190v_3}{27} \right) n + \left( \frac{200v_3}{27}-1 \right) + O\left( \frac{1}{n}\right) 
\end{equation*}
Numerically, the coefficients on $n$ of the lower bound and upper bounds are approximately 0.35711 and 7.14217, respectively.
\end{thm:expectedvolume}

\begin{proof}
Note first that the inequality in \eqref{lackenbyineq} only apply to hyperbolic diagrams. So, if $H$ is the event that the diagram is hyperbolic, then
\[\expect{\Vol} = \condexpect{\Vol}{H} P(H) + 0  \]
as the volume is 0 in the other case.

Which diagrams in this model are not hyperbolic? As mentioned before, due to \cite{menasco84}, any such diagram must represent a torus link; however, there are just two such diagrams, the two rootings of the standard diagram of the 2-braid torus link $T(2,n)$ \cite{adams}, as in Figure \ref{fig:toruslink}.
For hyperbolic diagrams, we have the fact that the twist number $t$ is the number of crossings $n$ minus the number of bigons $N_2$, so we can compute the expected twist number for hyperbolic diagrams. For the two exceptions of $T(2,n)$, we have $n$ crossings, $n$ bigons, and twist number 1. That is, the conditional expectation for $N_2$ is 
\[\expect{N_2} = \condexpect{N_2}{ H} P(H) + n (P(H^c))  \]
In the hyperbolic case, we can compute the expected twist number using Theorem~\ref{expectedbigons}.

 \begin{align*}
\condexpect{t}{H} & = n - \condexpect{N_2}{H}  \\
                  & = n - ( \expect{N_2} - nP(H^c) ) \frac{1}{P(H)}  \\
                  & = n - (2n P(2,n) + O(na^n) - nP(H^c)) \frac{1}{P(H)} \\
                  & = n - \left( 2n \left( \frac{4}{27} + \frac{10}{27n} + O\left( \frac{1}{n^2}\right) \right) + O( na^n ) - n P(H^c) \right) \frac{1}{P(H)} \\
\intertext{Expanding out everything, and combining the asymptotically small terms gives}
                  & = \left( nP(H) - \frac{8n}{27} + \frac{20}{27} + O \left( \frac{1}{n} \right) \right) \frac{1}{P(H)} \\
                  & = \left( n(1-P(H^c)) - \frac{8n}{27} + \frac{20}{27} + O\left( \frac{1}{n} \right) \right) \frac{1}{P(H)} \\
                  & = \left( \frac{19n}{27} -nP(H^c) + \frac{20}{27} + O \left( \frac{1}{n}\right) \right) \frac{1}{P(H)} \\
\intertext{Absorbing the exponentially small term $nP(H^c)$ into $O \left( \frac{1}{n} \right)$, we get}
                  & = \left( \frac{19n}{27} + \frac{20}{27} + O\left( \frac{1}{n} \right) \right) \frac{1}{P(H)} 
 \end{align*}

With this, we are finally set up to bound the expected volume.  For hyperbolic diagrams, we have by linearity and monotonicity of expectation, as well as \eqref{lackenbyineq}

\[ \frac{v_3}{2}(t-2)  \leq  \Vol   <  10v_3(t-1) \]
\[ \frac{v_3}{2}(\condexpect{t}{H}-2)  \leq  \condexpect{\Vol}{H}   <  10v_3(\condexpect{t}{H}-1) \]

Multiplying through by $P(H)$, we get the expected volume over all diagrams, and with the result of the computation of the expected twist number, we get

\[ \frac{v_3}{2} \left( \frac{19n}{27} + \frac{20}{27} + O \left( \frac{1}{n} \right) -2P(H) \right)  \leq  \expect{\Vol}   <  10v_3 \left( \frac{19n}{27} + \frac{20}{27} + O \left( \frac{1}{n} \right) -P(H) \right) \]
\[ \frac{19v_3}{54}n + \frac{10v_3}{27}-2 + O\left( \frac{1}{n} \right)  \leq  \expect{\Vol}   <  \frac{190v_3}{27}n + \frac{200v_3}{27}-1 + O\left( \frac{1}{n} \right) \]

Again, we have replaced the $P(H)$ terms with $1-P(H^c)$ and absorbed those exponentially small terms into the $O(\frac{1}{n})$. So, the result is obtained.

\end{proof}

So, the expected volume is bounded below and above by linear functions, up to some terms that go to zero for large $n$.  The upper bound we get from this process is worse than the bound we would have gotten from using the octahedral bound. For an $n$ crossing link diagram representing a link $K$, this bound gives that
\[ \mathit{Vol}(S^3-K) \leq v_8 n\]
This bound applies for the nonhyperbolic cases trivially, so by taking expectation, we get the following bound.
\begin{lem}
  The expected hyperbolic volume $\Vol$ of a random alternating link diagram with $n$ crossings is bounded above:
\begin{equation}
\label{octahedralbound}
\expect{\Vol} \leq v_8 n   
\end{equation}
where $v_8 \approx 3.6638$ is the volume of a regular hyperbolic ideal octahedron.
\end{lem}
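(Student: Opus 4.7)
The plan is to reduce the bound on the expectation to a pointwise (deterministic) bound that holds for every diagram in $SQ(n)$, and then take expectation. The pointwise bound is exactly the octahedral decomposition mentioned in the introduction and attributed to Dylan Thurston: given any link diagram $D$ with $n$ crossings representing a link $K$, one can decompose $S^3 \setminus K$ into $n$ ideal octahedra, one centered at each crossing. First I would state this decomposition and cite \cite{garoufalidisle}.

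Next, I would observe that when $K$ is hyperbolic, Mostow rigidity lets us straighten the decomposition to a geometric (possibly degenerate) decomposition by hyperbolic ideal octahedra. The volume of any hyperbolic ideal octahedron is bounded above by $v_8$, the volume of the regular ideal octahedron, since the regular one maximizes volume among ideal octahedra. Summing over the $n$ pieces yields the pointwise bound $\mathit{Vol}(S^3 \setminus K) \leq v_8 n$ for every hyperbolic alternating link obtained from a diagram in $SQ(n)$.

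Then I would dispense with the non-hyperbolic case by appealing to the convention that $\Vol(D) = 0$ whenever $D$ is non-hyperbolic, so the bound $\Vol(D) \leq v_8 n$ holds vacuously in that case as well. Combining the two cases, the inequality $\Vol(D) \leq v_8 n$ holds for every $D \in SQ(n)$.

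Finally, taking expectation with respect to the uniform measure on $SQ(n)$ and using monotonicity of expectation yields $\expect{\Vol} \leq v_8 n$, which is the claim. There is no real obstacle here: the combinatorial and probabilistic content is trivial once the octahedral decomposition is invoked, and all of the analytic content is packaged in the fact that $v_8$ is the maximal volume of an ideal hyperbolic octahedron.
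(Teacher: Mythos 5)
Your proposal is correct and matches the paper's own argument: the paper likewise invokes the octahedral decomposition of Dylan Thurston (via \cite{garoufalidisle}) to get the pointwise bound $\mathit{Vol}(S^3-K) \leq v_8 n$, notes that it holds trivially for nonhyperbolic diagrams under the volume-zero convention, and then takes expectation. The only difference is that you spell out the straightening step via Mostow rigidity, which the paper leaves implicit.
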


Hence, for a slightly stronger result, we can use the lower bound from the proof above and the upper bound from the octahedral decomposition, giving the statement of Theorem~\ref{thm:expectedvolume} in the introduction.
\begin{thm:expectedvolume}[Improved]
The expected hyperbolic volume $\expect{\Vol}$ of an $n$-crossing alternating link diagram is bounded by
\begin{equation*}
\left( \frac{19v_3}{54} \right) n + \left( \frac{10v_3}{27}-2 \right) +O\left( \frac{1}{n} \right) \leq \expect{\Vol} \leq v_8 n
\end{equation*}
Numerically, the coefficients on $n$ of the lower bound and upper bounds are approximately 0.3571 and 3.6638, respectively.
\end{thm:expectedvolume}

We'd like to divide by $n$ and take a limit, but it's unknown whether the expected volume per crossing converges to any limiting value. Accordingly, we offer this seemingly intractable question as a conjecture. We also present experimental evidence for it in the next section.
\begin{conj}
The expectation of the hyperbolic volume per crossing,
\[ \frac{1}{n}\expect{\Vol} \]
 converges to a limiting value as $n \to \infty$.
\end{conj}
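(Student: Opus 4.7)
My approach would combine a local-limit (Benjamini--Schramm) statement for the underlying random 4-valent maps with a near-additivity property of hyperbolic volume along such local limits. First, I would show that when the root edge of a uniform element of $SQ(n)$ is forgotten and replaced by a uniformly random oriented edge, the resulting random rooted map converges in the Benjamini--Schramm (local weak) sense to some limiting infinite random rooted 4-valent planar map $M_\infty$. Via the medial bijection with nonseparable planar maps, such a limit should be accessible by adapting the techniques used for the UIPQ and related random maps (Chassaing--Durhuus, Krikun, Curien--Miermont), provided one verifies that the 3-edge-connectivity constraint is preserved in the limit with the correct frequency.

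Second, I would try to express $\expect{\Vol}/n$ as an asymptotically local functional in the sense that the contribution of a uniformly chosen crossing depends, up to small error, only on the combinatorial ball of radius $R$ around it. The natural candidate is Dylan Thurston's ideal octahedral decomposition: each crossing contributes one ideal octahedron, and the volume is the sum of the shape-dependent volumes. Writing $v_R$ for the expected volume attributable to the radius-$R$ neighborhood of a uniformly chosen crossing, Benjamini--Schramm convergence together with boundedness (via $v_8$) would give $v_R \to L_R$ as $n\to\infty$, and one would then need $L_R$ to converge to a finite limit $L$ as $R \to \infty$, which would yield $\tfrac{1}{n}\expect{\Vol} \to L$.

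A more combinatorial alternative would be to establish a near-superadditivity inequality
\begin{equation*}
  \expect{\Vol}(n_1+n_2) \;\geq\; \expect{\Vol}(n_1)+\expect{\Vol}(n_2) - o(n_1+n_2),
\end{equation*}
via a splicing operation along two strands of two independent random diagrams, and then apply a generalized Fekete lemma. The obstacle here is that the most obvious splicings (e.g., connect sums) produce composite and hence typically non-hyperbolic links, so one must splice along a boundary with enough complexity to guarantee the resulting diagram remains prime and alternating with high probability, and then bound below the volume lost by the surgery, perhaps through Agol--Storm--W.~Thurston-type estimates.

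\textbf{Main obstacle.} In both approaches the core difficulty is the fundamentally nonlocal nature of hyperbolic volume: the shape of each ideal octahedron in Thurston's decomposition is determined by gluing equations that couple the full diagram, so altering one bigon can perturb many simplex shapes at once. Quantifying this coupling --- for instance, a Lipschitz estimate for volume under local diagrammatic surgery, or a decay-of-correlations statement for the shape parameters --- is, I expect, where essentially all of the technical weight lies, and is presumably why this conjecture remains out of reach despite the sharp two-sided linear bounds of Theorem~\ref{thm:expectedvolume}.
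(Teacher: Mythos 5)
This statement is a \emph{conjecture} in the paper: the author explicitly says it is ``unknown whether the expected volume per crossing converges to any limiting value,'' offers only numerical evidence and the $\liminf$/$\limsup$ bounds of Theorem~\ref{thm:expectedvolume}, and proves nothing. So there is no proof in the paper to compare yours against, and your proposal, by your own framing, is a research plan rather than a proof. The central gap is the one you yourself identify, and it is fatal as things stand: Benjamini--Schramm convergence of the rooted maps (which is plausible, and whose analogues for quadrangulations are known) only transfers to convergence of $\frac{1}{n}\expect{\Vol}$ if volume is an asymptotically local functional of the diagram, and no such locality or Lipschitz-under-local-surgery estimate is known. The octahedral decomposition gives one octahedron per crossing, but the shapes are determined by the global gluing and completeness equations, so ``$v_R \to L_R$ and $L_R \to L$'' is exactly the open problem restated, not a reduction of it.

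The superadditivity route has an additional gap you do not flag. Even granting a splicing operation that preserves primality and loses only $o(n_1+n_2)$ volume, the quantity $\expect{\Vol}$ at $n_1+n_2$ crossings is an average over the \emph{uniform} measure on $SQ(n_1+n_2)$, whereas splicing two independent uniform diagrams of sizes $n_1$ and $n_2$ produces a very different (and much more concentrated) measure on a small subset of $SQ(n_1+n_2)$. A volume inequality for spliced diagrams therefore does not yield $\expect{\Vol}(n_1+n_2) \geq \expect{\Vol}(n_1)+\expect{\Vol}(n_2)-o(n_1+n_2)$ without a separate argument comparing the two measures, and the naive connect-sum splicing is worse still, since it produces composite, hence nonhyperbolic, links, for which the paper's convention sets $\Vol = 0$. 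In short: your plan correctly locates the difficulty, but neither branch closes it, and the statement remains a conjecture.
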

However, we can still state limiting behavior in terms of $\liminf$ and $\limsup$.

\begin{cor}
Let $\Vol$ be the random variable which returns the hyperbolic volume of a random alternating link diagram with $n$ crossings. The expected hyperbolic volume per crossing is bounded by
\begin{equation*}
\frac{19v_3}{54} \leq \liminf_{n \to \infty} \left( \frac{1}{n}\expect{\Vol} \right)  \leq  \limsup_{n \to \infty} \left( \frac{1}{n}\expect{\Vol} \right)  \leq  v_8
\end{equation*}
\end{cor}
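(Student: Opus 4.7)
The plan is to derive the corollary as an essentially immediate consequence of the (improved) Theorem~\ref{thm:expectedvolume}, which has already been proved in the excerpt. That theorem provides both a linear lower bound and a linear upper bound on $\expect{\Vol}$, so dividing through by $n$ and passing to the appropriate one-sided limits should produce exactly the stated $\liminf$/$\limsup$ inequalities.

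More precisely, I would start from the two-sided bound
\[ \left( \frac{19v_3}{54} \right) n + \left( \frac{10v_3}{27}-2 \right) + O\!\left( \frac{1}{n} \right) \;\leq\; \expect{\Vol} \;\leq\; v_8\, n \]
and divide by $n$, obtaining
\[ \frac{19v_3}{54} + \frac{1}{n}\left( \frac{10v_3}{27}-2 \right) + O\!\left( \frac{1}{n^2} \right) \;\leq\; \frac{1}{n}\expect{\Vol} \;\leq\; v_8. \]
For the upper estimate, the right-hand inequality is uniform in $n$, so $\limsup_{n\to\infty} \tfrac{1}{n}\expect{\Vol} \leq v_8$ follows by definition. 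For the lower estimate, the left-hand side is a sequence in $n$ that converges to $\tfrac{19v_3}{54}$ since the $1/n$ and $O(1/n^2)$ terms vanish; since $\tfrac{1}{n}\expect{\Vol}$ is bounded below by a convergent sequence, its $\liminf$ is at least $\tfrac{19v_3}{54}$. The middle inequality $\liminf \leq \limsup$ is automatic for any sequence of real numbers.

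There is no substantive obstacle to overcome: all of the combinatorial and geometric work has been absorbed into Theorem~\ref{thm:expectedvolume}, and the corollary just repackages its content in limit form. The only minor point to take care of is making explicit that the constant term $\tfrac{10v_3}{27}-2$ and the $O(1/n)$ error become negligible after dividing by $n$, so that they contribute nothing to the $\liminf$. Otherwise the argument is one line of algebra followed by the definition of $\liminf$ and $\limsup$.
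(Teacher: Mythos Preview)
Your proposal is correct and matches the paper's approach exactly: the paper treats this corollary as immediate from the improved Theorem~\ref{thm:expectedvolume}, obtained by dividing through by $n$ and taking limits, and does not give any further argument.
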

The conjecture then would be that the $\liminf$ and $\limsup$ above are actually equal.
\subsection{Numerical Experiments} \label{numericalexperiments}

The random link sampling algorithm is incorporated into SnapPy; we can look at the distribution of volumes that we get for different sizes of alternating diagrams.  Here we generated 1 million random diagrams of varying sizes, ranging from 10 to 1000 crossings, every 10 crossings, and plot in Figure \ref{fig:volumeboxplot}.

\begin{figure}
\centering
\includegraphics[scale=.5]{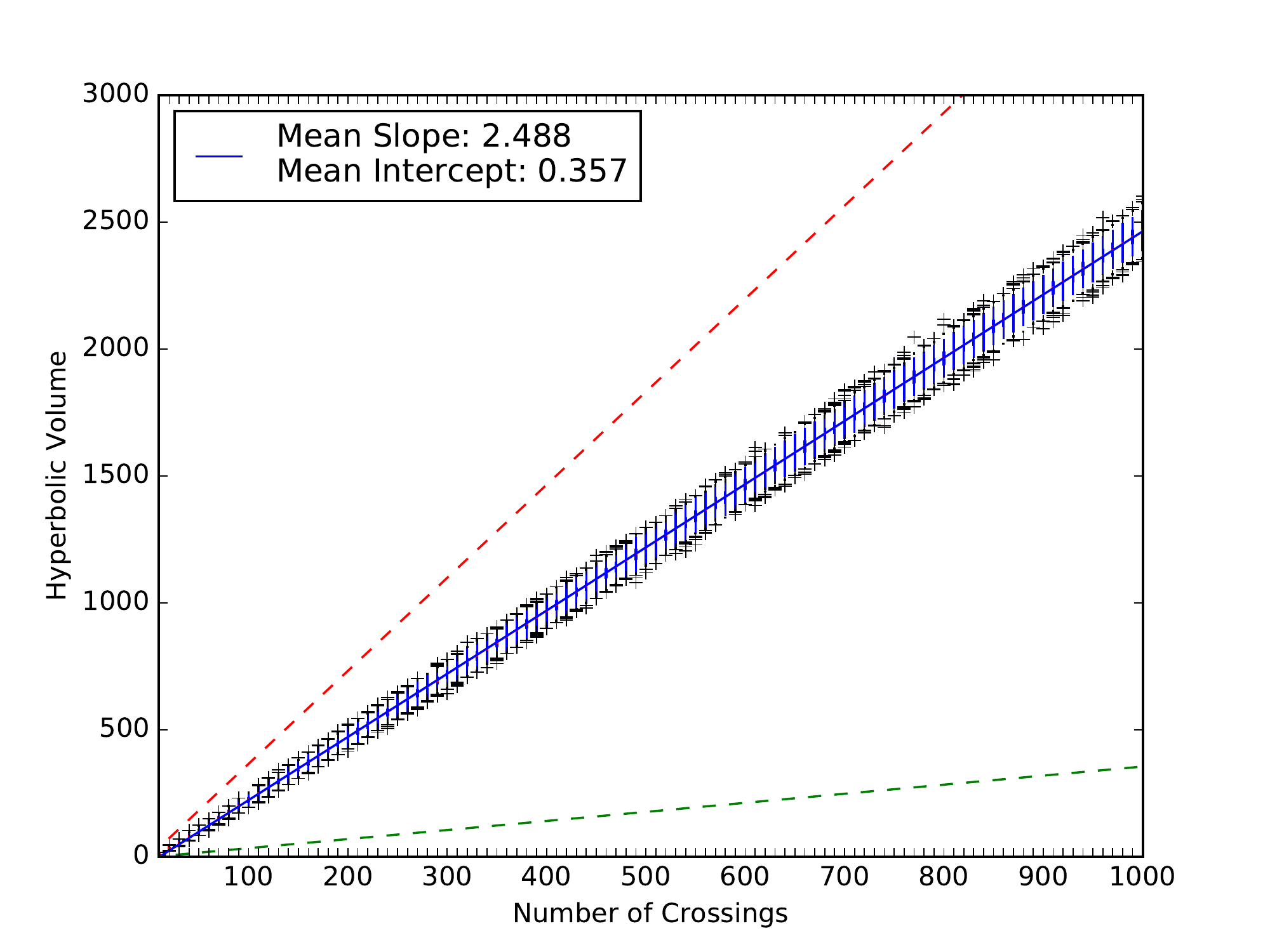}
\caption{Box-and-whisker plot of volumes of alternating links for a range of sizes. We generated 10,000 samples of size 10, 20, 30, etc. up to 1000. The dashed lines are the bounds from Theorem \ref{thm:expectedvolume}, and the plus signs are outliers.}
\label{fig:volumeboxplot}
\end{figure}

\begin{figure}
\centering
\includegraphics[scale=.5]{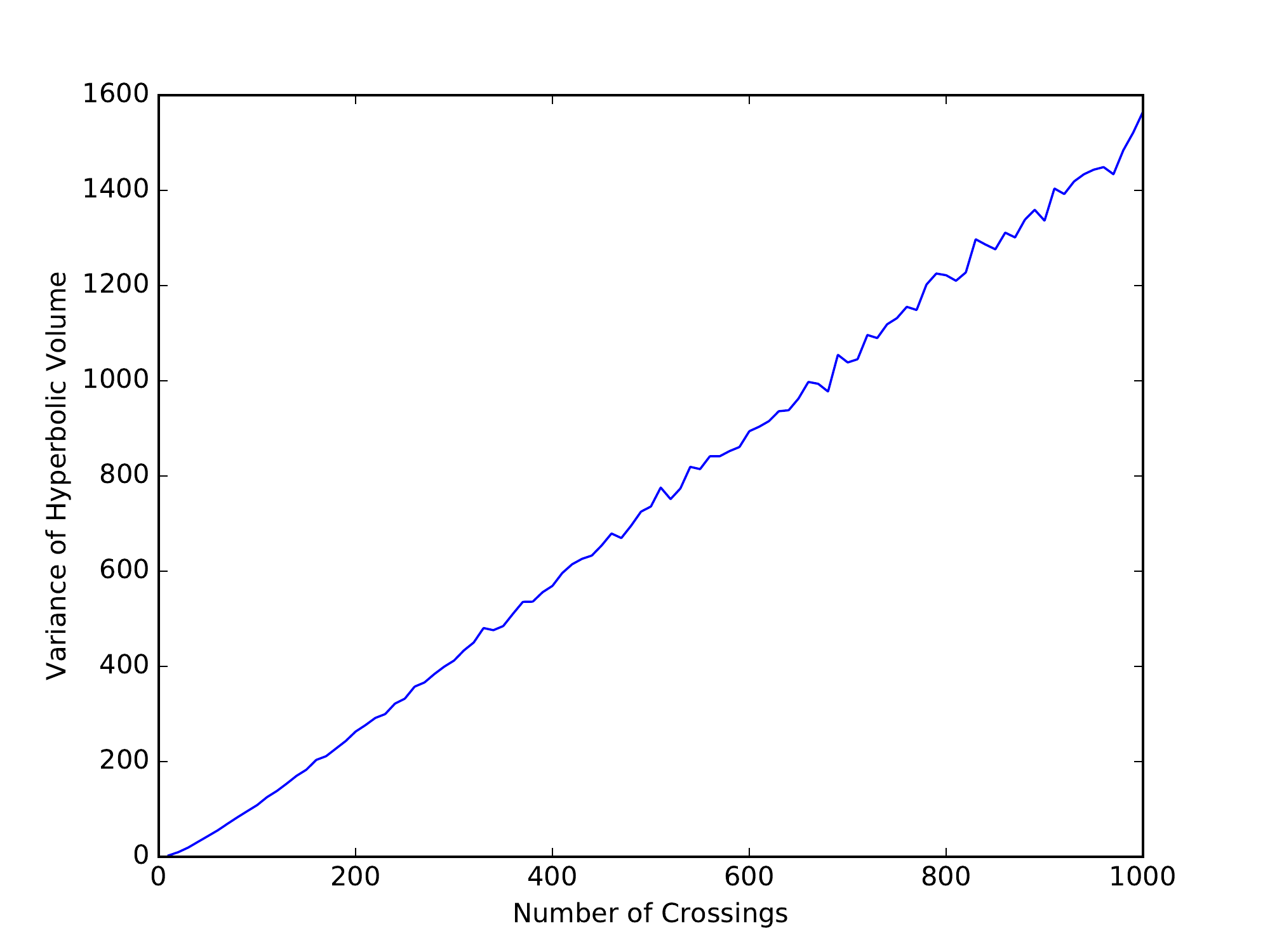}
\caption{Plot showing the growth of the variance of the data in Figure \ref{fig:volumeboxplot}. }
\label{fig:volume_variance}
\end{figure}

The volume does in fact appear to grow linearly, with slope around 2.5, which is then the observed expected volume per crossing. The variances are plotted in Figure \ref{fig:volume_variance}, and appear to grow linearly as well. We can also look at the distribution for a given number of crossings, which we plot in Figure~\ref{fig:volumedist}.

\begin{figure}[b]
\centering
\includegraphics[scale=.6]{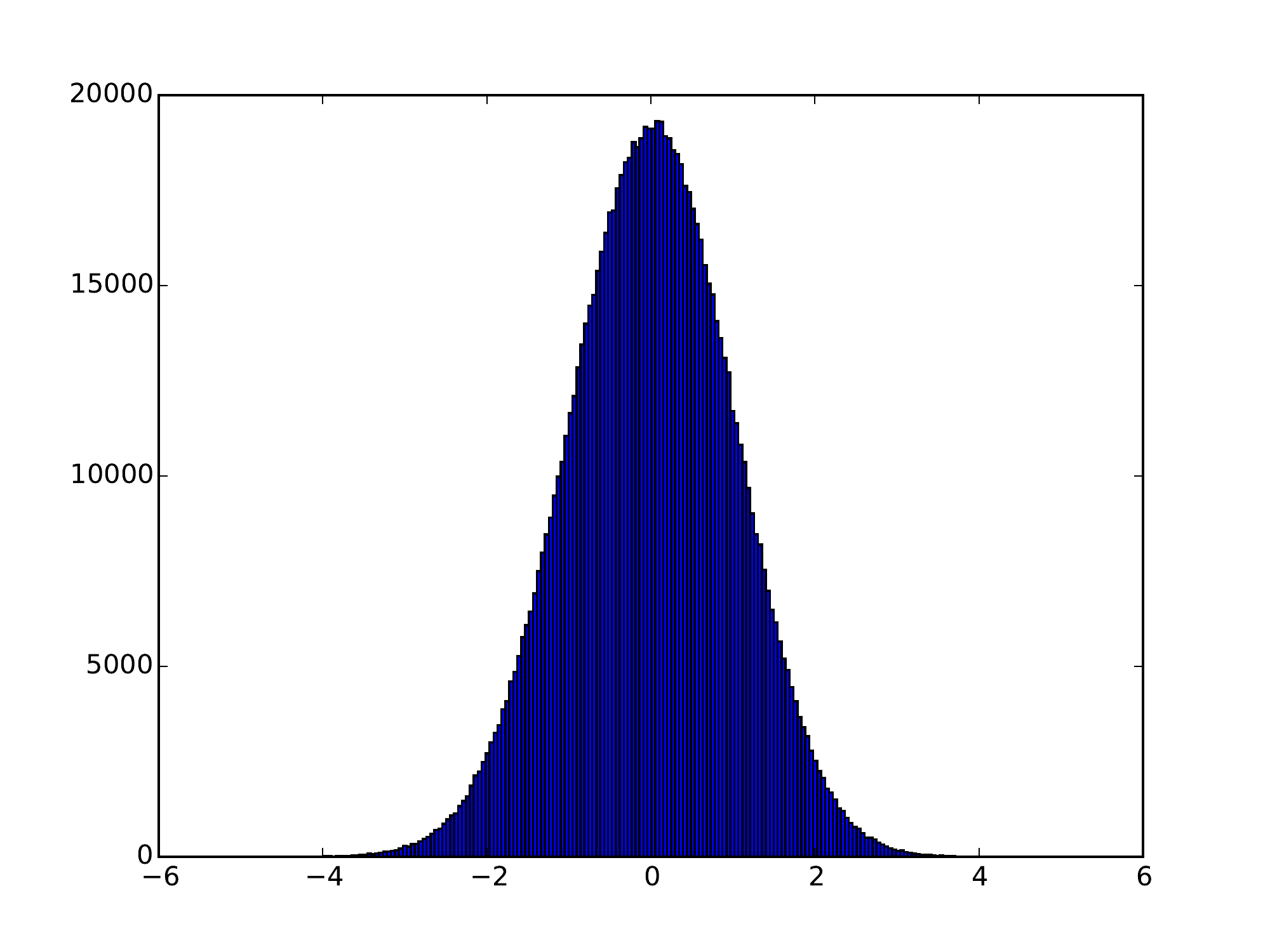}
\caption{Histogram for the volumes of 1 million randomly sampled alternating links with 900 crossings, normalized to mean 0 and variance 1.  The actual mean and variance were approximately 2216.46 and 1371.02 respectively.}
\label{fig:volumedist}
\end{figure}

Normalizing, it seems to have a limiting distribution, though the distribution appears to not be normal --- experimentally, the distribution skews to the left. For each number of crossings, we took a million samples of volumes of random alternating link diagrams and computed the skew, which appears to be consistently small but negative. A possible explanation for this is that the distribution for the number of bigons (when normalized) appears to be slightly skewed in the other direction, positively (Figure \ref{fig:vol_and_bigon_skews}). 

\begin{center}
 \begin{tabular}{c |  c | c } 
 Number of Crossings & Skewness of Volume & Skewness of Number of Bigons  \\ [0.5ex] 
 \hline
 100 & -0.093858033849 & 0.040877113357 \\ 
 300 & -0.049351483215 & 0.018498451862 \\
 500 & -0.048770255118 & 0.018192743976 \\
 700 & -0.050229963233 & 0.014187648283 \\
 900 & -0.052681105543 & 0.011986807300 \\
 1100 & -0.051342016899 & 0.015350076951\\ [1ex] 
\end{tabular}
\end{center}

In fact, we can look at some higher moments as well (see Figure \ref{fig:moments}) and note that those also appear to vaguely tend towards limiting values, though the data is sparser than the above. This could lead one to conjecture the following.

\begin{conj}
The normalized distribution of $\Vol$ in this model converges to a skewed limiting distribution as $n \to \infty$.
\end{conj}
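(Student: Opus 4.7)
The plan is to split the conjecture into a combinatorial convergence statement about the underlying 4-valent map and a geometric transfer step that passes the resulting limit law through to hyperbolic volume. First I would try to prove a joint (near-)central limit theorem for the face-degree vector $(N_2, N_3, N_4, \ldots)$ of a uniform element of $SQ(n)$, then argue that, on a high-probability set, $\Vol$ is approximated accurately enough by an explicit functional of that vector to inherit its limit distribution.

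For the combinatorial step, I would use the bijection underlying the Jacquard--Schaeffer sampling algorithm to re-express the uniform distribution on $SQ(n)$ as a distribution on certain decorated trees, and then apply the standard limit theorems for conditioned Galton--Watson trees with finite-variance offspring distributions. The expected face counts $\expect{N_m}$ have already been pinned down via Brown's enumeration, so what is needed are covariance estimates. These can be pursued either through bivariate generating functions derived from Tutte's quadratic method, or by augmenting the enumeration with a second marked face. The output should be that $(N_m - \expect{N_m})/\sqrt{n}$ is jointly Gaussian (or at least tight) in the limit. As a byproduct this identifies the explicit slope $19/27$ and the limiting variance of the twist number, consistent with the linear variance growth visible in Figure~\ref{fig:volume_variance}.

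The main obstacle is the geometric transfer step. The Lackenby--Agol--Thurston bounds $v_3(t-2)/2 \leq \Vol < 10v_3(t-1)$ leave a multiplicative gap of about twenty, so a combinatorial CLT on its own gives only the coarse $\liminf$/$\limsup$ corollary above, not convergence in distribution. One would have to show that for a typical random diagram $\Vol$ is asymptotic to a specific deterministic functional of the face data. A promising route is to refine D.\ Thurston's octahedral decomposition: if one can show that in a random diagram most of the $n$ octahedra are close (in moduli) to the ideal regular one, with corrections that are small, local functionals of the surrounding face structure, then $\Vol$ would decompose as a sum over crossings of approximately short-range-dependent contributions, and a functional central limit theorem would apply. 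The empirical skewness --- negative for volume but positive for the number of bigons --- is consistent with such a picture, since a bigon chain collapses several would-be octahedra into a much smaller actual contribution, and a proof would have to make this dependence quantitative.

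If the rigidity of the octahedral decomposition turns out to be out of reach, a weaker version --- convergence of the normalized second moment of $\Vol$ --- should already follow from a second-moment calculation on the combinatorial side combined with the Lackenby--Agol--Thurston sandwich, and would be a natural intermediate target en route to the full conjecture. Proving genuine convergence in distribution, however, appears to require new quantitative input from $3$-dimensional hyperbolic geometry, not just sharper map enumeration.
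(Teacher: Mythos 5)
This statement is offered in the paper as a conjecture supported only by the numerical experiments of Section~\ref{numericalexperiments}; the paper contains no proof, so your proposal cannot be measured against one. Judged on its own terms, what you have written is a research program rather than a proof, and you are candid about the two places it is incomplete: the joint limit law for the face-degree vector of a uniform element of $SQ(n)$ is asserted, not established (the covariance estimates via a second marked face or the quadratic method are nontrivial and not carried out), and the geometric transfer step is explicitly left open. The second gap is fatal for now: the Lackenby--Agol--Thurston sandwich $v_3(t-2)/2 \leq \Vol < 10v_3(t-1)$ has a multiplicative gap of roughly twenty, so no amount of distributional control on the twist number alone can force $\Vol$ to converge in distribution after normalization. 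Your proposed fix --- showing that $\Vol$ is asymptotic to a deterministic local functional of the face data via a rigidity statement for the octahedral decomposition --- is plausible but is precisely the missing ingredient, and nothing in the paper or in your sketch supplies it.

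There is also a structural tension you should confront: the conjecture asserts convergence to a \emph{skewed}, hence non-Gaussian, limiting distribution, while your machinery (conditioned Galton--Watson limit theorems, functional central limit theorems for sums of short-range-dependent local contributions) is engineered to produce a Gaussian limit. If your program succeeded as written, it would prove a different statement --- asymptotic normality --- and would in fact refute the skewness claim, since the normalized skewness of such a sum tends to zero. The experimental data in the paper (skewness stabilizing near $-0.05$ rather than decaying) is evidence \emph{against} a classical CLT mechanism, so either the dependence structure among the local contributions is genuinely long-range, or the volume functional is not well approximated by a sum of local terms. Either way, you would need to either weaken the conjecture to asymptotic normality before attacking it this way, or identify the source of the persistent skew and build it into the limit object; as it stands the proposal aims at the wrong target.
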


\begin{figure}
\centering
\includegraphics[scale=.5]{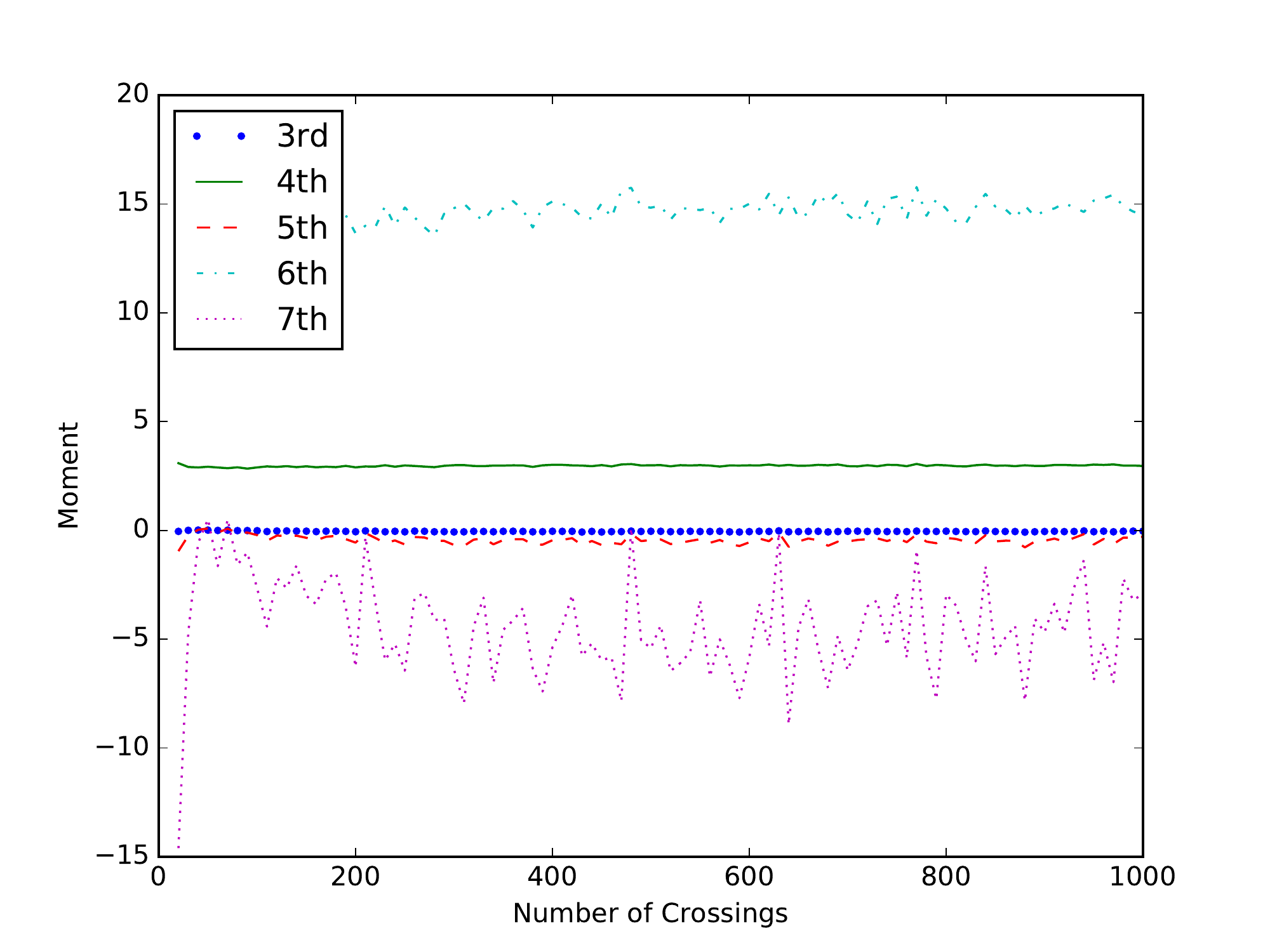}
\caption{Some higher moments for alternating link diagrams sampled from crossing number 20 to 1000.}
\label{fig:moments}
\end{figure}

\begin{figure}
\centering
\includegraphics[scale=.5]{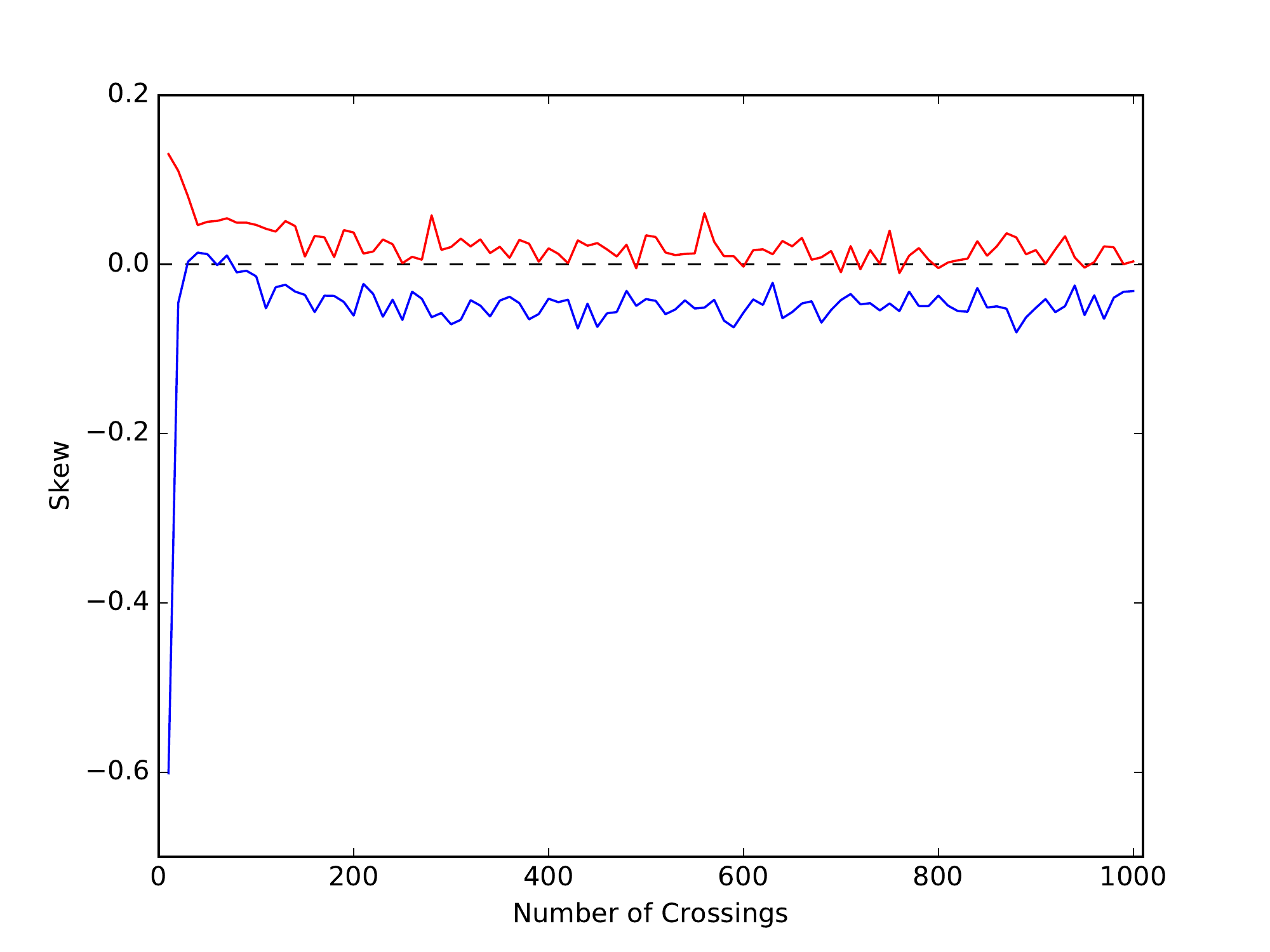}
\caption{The skews for the volume and number of bigons for collections of samples ranging from 10 crossings to 1000. The skew for the volume (the lower graph) tends to be slightly negative, and for the number of bigons (the upper graph) slightly positive.}
\label{fig:vol_and_bigon_skews}
\end{figure}

\section{Nonalternating Diagrams} \label{nonalternatingdiagrams}
If we expand to randomly generating nonalternating diagrams, then large diagrams actually are increasingly \emph{non-hyperbolic}; the existence of certain ``local pictures'' forbids the link from admiting a hyperbolic structure. This is due to the existence of essential tori.

\begin{figure}
\begin{subfigure}{.5\textwidth}
\centering
\includegraphics[width=.9\linewidth]{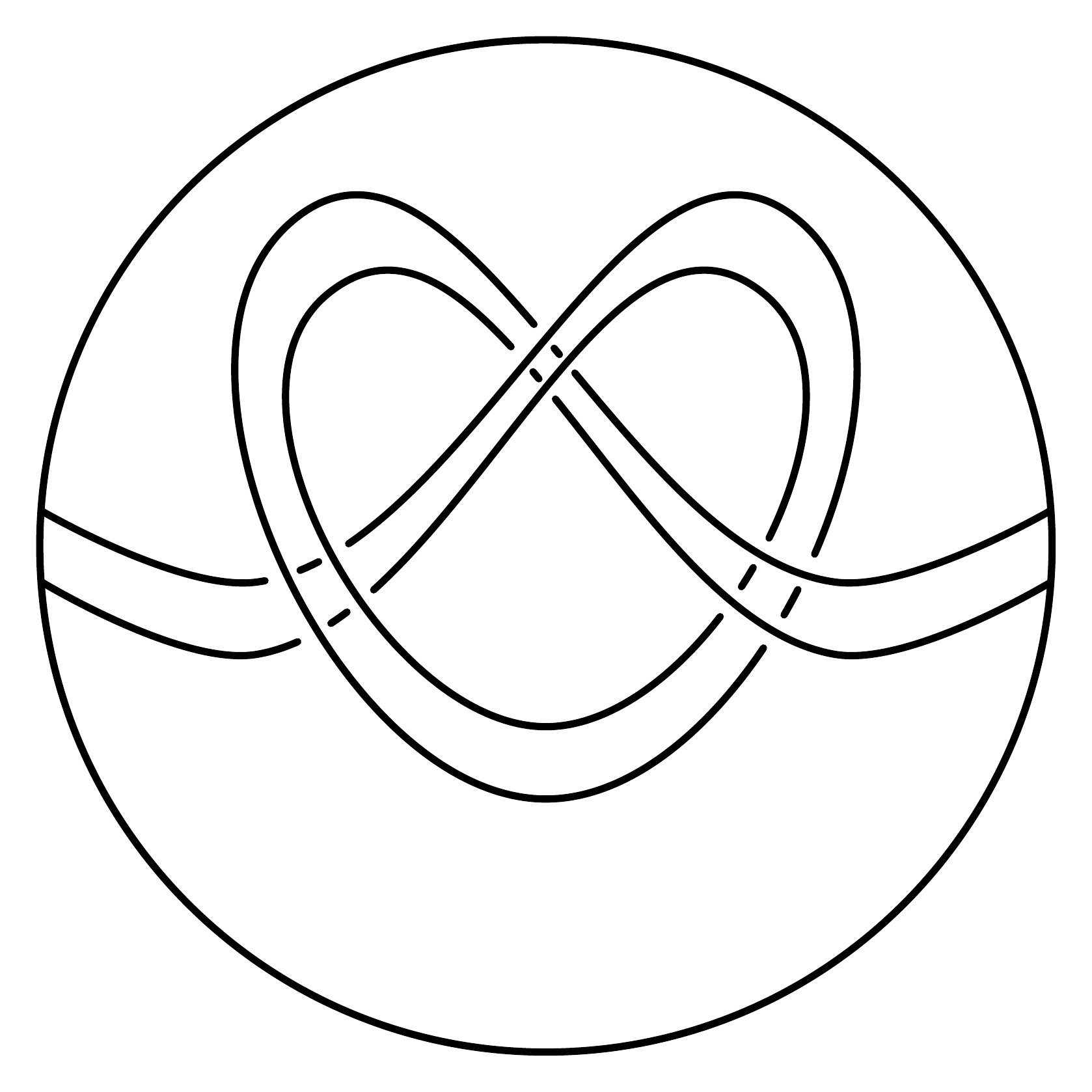}
\end{subfigure}%
\begin{subfigure}{.5\textwidth}
\centering
\includegraphics[width=.9\linewidth]{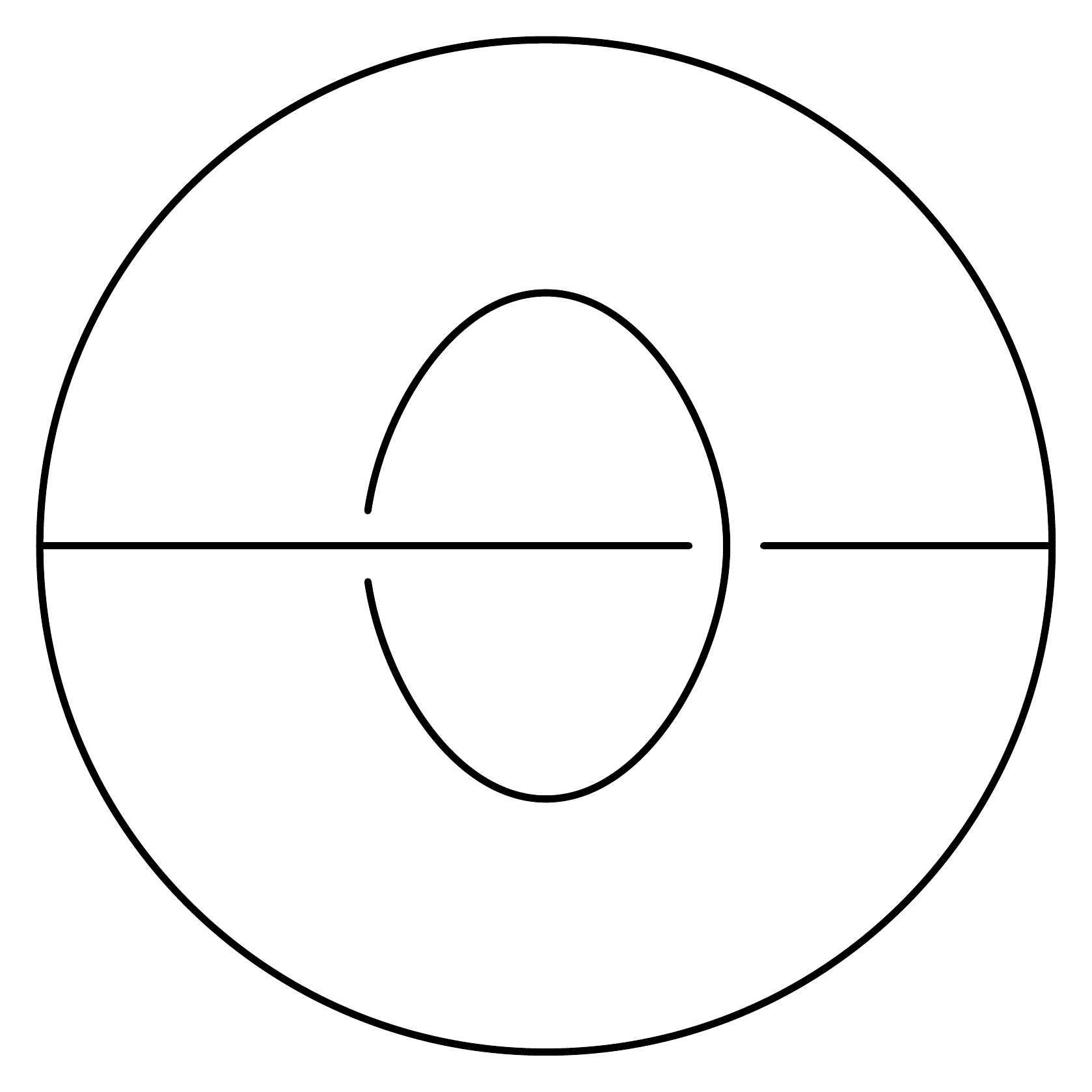}
\end{subfigure}
\caption{}
\label{fig:forbiddentangle}
\end{figure}

A diagram where either of the configurations in Figure \ref{fig:forbiddentangle} is encountered is a satellite link, when the tangle on the other side is nontrivial. We would like to compute the probability that a given tangle occurs in a diagram. Since, for nonalternating links, the diagram being 3-edge-connected doesn't assure primeness of the link itself, we will simplify the matter by considering \emph{all} rooted 4-valent planar maps with $n$ vertices. This set is in bijection with the set of \emph{quadrangulations} with $n$ \emph{faces}, which we denote $Q(n)$. A quadrangulation is simply a planar map in which every face has four sides. This set $Q(n)$ was enumerated by Tutte in \cite{tuttecensus}:
\[|Q(n)| = \frac{2(3^n)(2n)!}{n!(n+2)!} \]
 Combinatorially, we will view a tangle as being dual to a \emph{quadrangulation with boundary}: a planar map in which every face has 4 sides except for one face, the boundary face, which has arbitrary even size. We will call the number of faces the \emph{area} and the length of the boundary face the \emph{perimeter}. We will also restrict to quadrangulations with \emph{self-avoiding boundary} --- walking around the boundary face, we encounter each vertex only once (see Figure \ref{fig:quadrangulations}). In the dual, we can think of this as the projection of a tangle in which there are no strands which cross no other strands.

\begin{figure}
\centering
\includegraphics[scale=.3]{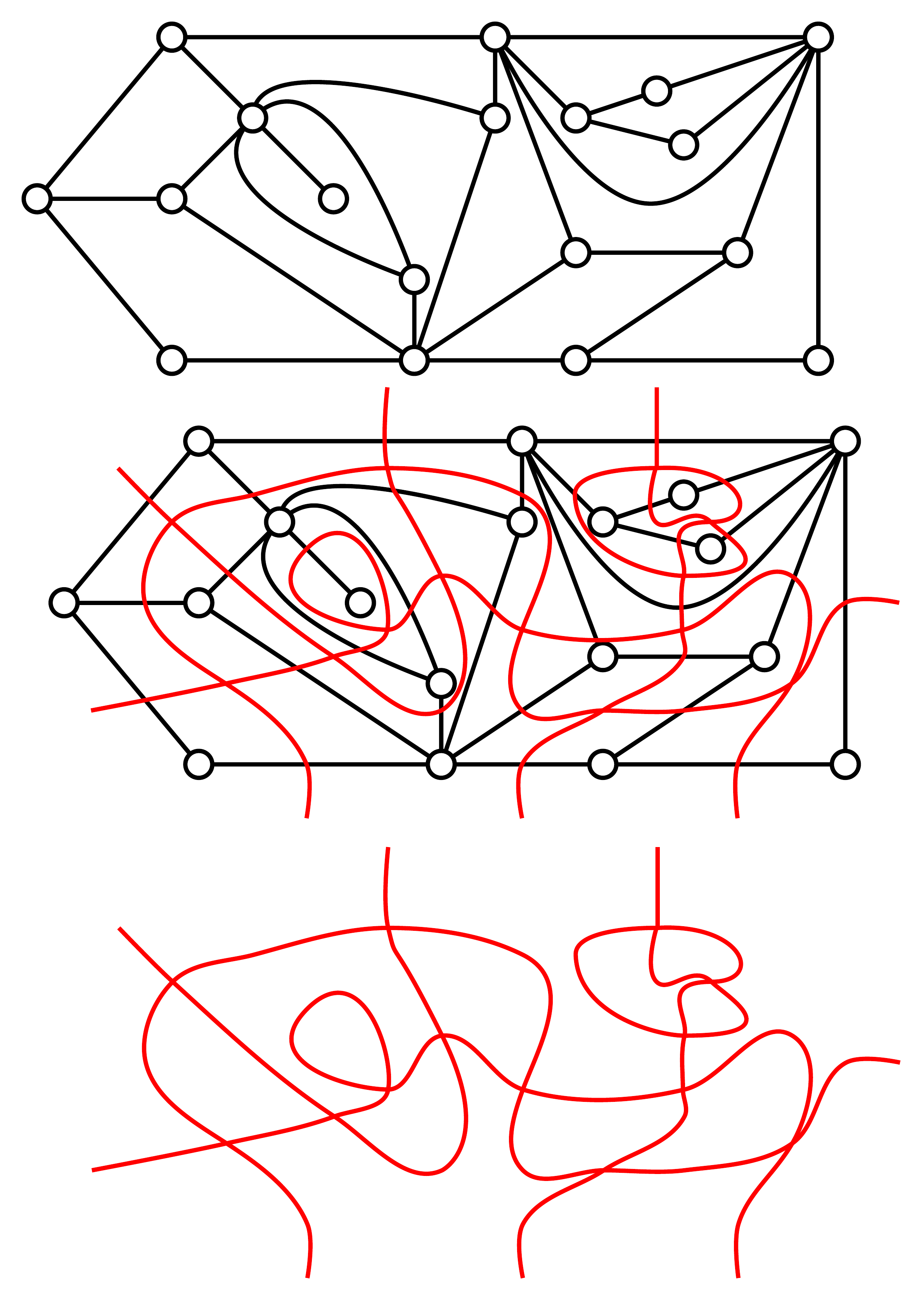}
\caption{A quadrangulation with self-avoiding boundary and the local picture defined by the dual graph. Here the quadrangulation has area 13 and perimeter 8.}
\label{fig:quadrangulations}
\end{figure}

Let $Q(n,p)$ then be the set of all rooted quadrangulations with self-avoiding boundary, area $n$, and perimeter $2p$, such that boundary face is to the right of the root. These were enumerated in \cite{bouttierguitter}:

\[|Q(n,p)| = 3^{n-p} \frac{(3p)!}{p!(2p-1)!} \frac{(2n+p-1)!}{(n-p+1)!(n+2p)!} \]

We want to know the probability that a given rooted quadrangulation with boundary $K$ has an embedding into a quadrangulation $Q$, in terms of the area and perimeter of $K$ and the area of $Q$. We require as well that the embedding take root to root. The probability is computed with a counting argument analogous to one used by Krikun in \cite{krikun}; the only difference here is the use of a slightly more general class of quadrangulation with boundary which results in a slightly simpler formula.

\begin{prop}
\label{prop:nonalternatingprob}
The probability that a rooted quadrangulation $K$ with area $n$ and perimeter $2p$ embeds in a rooted quadrangulation $Q$ with $N$ faces depends only on $n,p, $ and $N$:

\[\mathit{Prob}(K \hookrightarrow Q) = P(n,p,N) := \frac{|Q(N-n,p)|}{|Q(N)|} \]

As $N \rightarrow \infty$ the limiting probability is

\[ P_{\mathit{lim}}(n,p) := \lim_{N \rightarrow \infty } P(n,p,N) = \frac{(3p)!}{9\cdot p! (2p-1)!} \left( \frac{2}{3} \right)^{p-2} 12^{-n} \]
\end{prop}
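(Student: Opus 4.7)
The plan is to prove the exact formula by a direct bijective argument --- count embeddings of $K$ into elements of $Q(N)$ by ``cutting out'' $K$ --- and then derive the limit by a routine expansion of the explicit enumerative formulas for $|Q(n,p)|$ and $|Q(N)|$.

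First, I would establish a bijection between the set $\mathcal{E}_N$ of $Q \in Q(N)$ such that $K$ embeds in $Q$ with its root mapped to the root of $Q$, and the set $Q(N-n,p)$. Given $Q \in \mathcal{E}_N$, the image of the boundary cycle of $K$ is a simple cycle in $Q$ of length $2p$ containing the root edge (the self-avoiding boundary hypothesis on $K$ is essential here). Cutting $Q$ along this cycle separates the sphere into a closed disk containing $K$ --- covering $n$ of the $N$ quadrilateral faces of $Q$ --- and a complementary closed disk whose interior contains the remaining $N-n$ quadrilateral faces. The complement, with its boundary cycle designated as the new boundary face, is itself a quadrangulation with self-avoiding boundary of area $N-n$ and perimeter $2p$. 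Rooting it at the shared root edge but with the reversed orientation places its boundary face on the right, yielding a well-defined element of $Q(N-n,p)$.

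Next, I would verify this cutting map is reversible: given any $K^c \in Q(N-n,p)$, gluing it to $K$ along the common $2p$-cycle while matching the two root edges with opposite orientations reconstructs a unique $Q \in \mathcal{E}_N$. Hence the map is a bijection, so $|\mathcal{E}_N| = |Q(N-n,p)|$. Since $Q$ is sampled uniformly from $Q(N)$, this immediately yields $\mathit{Prob}(K \hookrightarrow Q) = |Q(N-n,p)|/|Q(N)| = P(n,p,N)$. For the asymptotic, I would substitute the Bouttier--Guitter and Tutte formulas into this ratio, pull out the $N$-independent prefactor $\tfrac{(3p)!}{2\,p!(2p-1)!}\,3^{-n-p}$, and estimate the remaining ratio of factorials as $N \to \infty$ via $(N-a)!/N! \sim N^{-a}$. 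A bookkeeping check shows the net power of $N$ vanishes: the three factorial quotients contribute $N^{n+p-1}$, $N^{n-2p+2}$, and $(2N)^{-(2n-p+1)}$, whose $N$-exponents sum to zero, leaving the constant $2^{p-2n-1}$. Collecting constants produces $\frac{(3p)!}{9\,p!(2p-1)!}(2/3)^{p-2}\,12^{-n}$.

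The main obstacle is the careful bookkeeping in the bijection --- verifying that the self-avoiding boundary condition passes between $K$ and $K^c$, that the rooting conventions uniquely specify $K^c$ (only one orientation of the shared root edge is compatible with the boundary-on-the-right convention, so no factor-of-two overcount appears), and that cutting and gluing are genuinely inverse operations. The limit calculation itself is a routine but error-prone expansion, and the main sanity check is that the powers of $2$, $3$, and $N$ combine correctly to produce the constant $(2/3)^{p-2}\,12^{-n}$.
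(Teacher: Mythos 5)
Your proof is correct and follows essentially the same route as the paper: cut $Q$ along the embedded boundary of $K$ to obtain a bijection with $Q(N-n,p)$, glue to invert, then expand the Bouttier--Guitter and Tutte formulas to get the limit. The only minor discrepancy is that you assume the root edge lies on the boundary cycle of $K$, whereas the paper notes the root of $K$ need not be on the boundary and instead fixes a deterministic boundary rooting before cutting --- a cosmetic difference in convention that does not affect the argument.
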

\begin{proof}
As in \cite{krikun}, we count the number of quadrangulations $Q$ in which $K$ embeds. Let $Q$ be such a quadrangulation. As is, the root of $K$ is not along the boundary as in Bouttier and Guitter's definition in \cite{bouttierguitter}. To fix this, choose, in some deterministic fashion, a root along the boundary of $K$, oriented with the boundary face to the right. Then, cut $Q$ along the boundary of $K$; outside $K$ we now have a quadrangulation with boundary with perimeter $p$ and area $N-n$, and a root in a fixed orientation along the boundary. So, given $Q$ and $K$, we get an element of $Q(N-n,p)$. Conversely, given $K$ and an element of $Q(N-n,p)$, we can simply glue along the boundaries, attaching the root edges, to uniquely give an inverse. Then, portion of quadrangulations $Q \in Q(N)$ in which $K$ embeds is 

\[\frac{|Q(N-n,p)|}{|Q(N)|} \]

To get the explicit formula in the limiting case, we use Stirling's approximation to get asymptotics for $Q(N,p)$ and $Q(N)$ for large $N$:

\begin{equation*}
\begin{split}
|Q(n,p)| & =  3^{n-p} \frac{(3p)!}{p!(2p-1)!} \frac{(2n+p-1)!}{(n-p+1)!(n+2p)!} \\
         & \approx  \frac{(3p)!2^{p-1}}{3^p \cdot p!(2p-1)!\sqrt{\pi}}12^k k^{-5/2} \\
P(n,p,N) & =  \frac{|Q(N-n,p)|}{|Q(N)|} \\
         & \approx  \frac{(3p)!2^{p-2}12^{-n}}{3^p \cdot p! (2p-1)!} \frac{(N-n)^{-5/2}}{N^{-5/2}} \\
         & \rightarrow  \frac{(3p)!}{9\cdot p! (2p-1)!} \left( \frac{2}{3} \right)^{p-2} 12^{-n} 
\end{split}
\end{equation*}
\end{proof}

So, by taking the dual picture, any possible neighborhood, including the underlying 4-valent map of the tangle in Figure \ref{fig:forbiddentangle}, occurs with positive probability, exponential decreasing in the number of crossings in the tangle. Since the crossings are decided by independent coin flips, this means that the probability that a tangle $T$ with $n$ crossings and $2p$ boundary points appears around the root in a random rooted link diagram $L$ with $N$ crossings will be given by:
\[ \mathit{Prob}(T \hookrightarrow L) = (2^{-n})P(n,p,N) \]
This shows that large diagrams in this model will be non-hyperbolic with probability approaching 1. More specifically, one can show the following.

\newtheorem*{thm:genericallynothyperbolic}{Theorem \ref{thm:genericallynothyperbolic}}
\begin{thm:genericallynothyperbolic}
Let $T$ be a rooted tangle with $n$ crossings and $2p$ boundary points. Then, the number of rootings $N_{T,c}$ of a random $c$-crossing link diagram $L$ for which $T$ embeds around the root has expectation which is asymptotically linear in $c$, the number of crossings:
\[  \expect{N_{T,c}} = (4c) 2^{-n} P(n,p,c) + O(cb^{-c}) \]
for some $b \in (0,1)$. Normalizing by the number of crossings $c$, we have a positive limiting expectation:
\[ \lim_{c \to \infty} \frac{1}{c} \expect{N_{T,c}} = 4\cdot2^{-n} P_{\mathit{lim}}(n,p) > 0 \]
\end{thm:genericallynothyperbolic}
\begin{proof}
Recall that the probability $P(n,p,c)$ is bounded away from 0 for large $c$, so such a function of $c$ is in fact asymptotically linear. We will work with the associated quadrangulation with boundary $K$ of $T$ and the associated quadrangulation $D$ of $L$. The proof proceeds similarly to the proof that the expected number of bigons is linear in the number of crossings. We define two random variables on the set $Q(c)$ of quadrangulations with $c$ faces:
\begin{align*}
 N_K(D) &= \text{\# of rootings of $D$ for which $K$ embeds around the root} \\ 
 R_K(D) &= \begin{cases}
    1 & \mbox{if $K$ embeds around the root of $D$} \\
    0 & \mbox{otherwise}
   \end{cases} \\ 
\end{align*}
We are suppressing the number of crossings $c$ in the subscripts $N_{K,c}$ and $R_{K,c}$ for ease of notation.
As in the proof for the alternating case, we can find the expectation of $R_K$ using Proposition~\ref{prop:nonalternatingprob}, and relate that to the expectation of $N_K$. The expectation of $R_K$ is by definition the probability that $K$ embeds around the root of $D$.
\begin{equation}
\label{totalRKexpect}
\expect{R_K} = \mathit{Prob}(K \hookrightarrow D) = P(n,p,c)
\end{equation}
Exactly as before, we partition $Q(c)$ into equivalence classes of diagrams under unrooted equivalence, and choose representatives $A_1, A_2, ... A_k$ of the asymmetric quadrangulations and let $S$ be the set of all symmetric quadrangulations. We'll denote the equivalence class of $D$ by $[D]$, which is the collection of all rootings of $D$. The relationship between the two random variables is then made clear by looking at the conditional expectations over an asymmetric equivalence class. Because $N_K$ only depends on the unrooted diagram,
\begin{equation}
\label{condNKexpect}
\condexpect{N_K}{ [A_i] } = N_K(A_i)
\end{equation}
For $R_K$, we sum $R_K(A_i)$ over all of the $4c$ rootings of the diagram $A_i$, which exactly counts the number of rootings around which $K$ embeds. Dividing by the total number of rootings $4c$,
\begin{equation}
\label{condRKexpect}
\condexpect{R_K}{ [A_i] } = \frac{1}{4c}N_K(A_i)
\end{equation}
Then, using the law of total expectation for the expectations of both random variables, we get:
 \begin{align*}
\expect{R_K} &= \sum_{i=1}^k\condexpect{R_K}{[A_i]}P([A_i]) + \condexpect{R_K}{S} P(S)  \\
\intertext{By \ref{condRKexpect},}
\expect{R_K} &= \sum_{i=1}^k\frac{1}{4c}N_K(A_i)P([A_i]) + \condexpect{R_K}{S} P(S) \\
\intertext{For $N_K$,}
\expect{N_K} &= \sum_{i=1}^k\condexpect{N_K}{[A_i]}P([A_i]) + \condexpect{N_K}{S} P(S) \\
\intertext{Using \ref{condNKexpect}, note that we have a very similar term}
\expect{N_K} &= \sum_{i=1}^k N_K(A_i)P([A_i]) + \condexpect{N_K}{S} P(S) \\
\intertext{Multiplying through the expectation of $R_K$ by $4c$ and substituting,}
\expect{N_K} &= (4c) \expect{R_K} - (4c)\condexpect{R_K}{S} P(S) + \condexpect{N_K}{S} P(S) \\
\intertext{Finally, by \ref{totalRKexpect} and \cite{richmondwormald},}
\expect{N_K} &= (4c) P(n,p,c) + O(c b^{-c})\\
 \end{align*}
 By taking the dual, we get the same formula for 4-valent planar maps, and since we are choosing uniformly how to orient the crossings, we get the result for rooted tangles embedding in rooted link diagrams by multiplying by $2^{-n}$, and the desired result is obtained.
 \end{proof}

Knowing that any local picture occurs with positive probability in a large random link immediately tells you a number of facts about random links.
\newtheorem*{cor:genericallynothyperbolic}{Corollary \ref{cor:genericallynothyperbolic}}
\begin{cor:genericallynothyperbolic}
For a random link diagram, the following quantities have expectation which is asymptotically bounded above and below by linear functions of the number of crossings:
\begin{enumerate}
\item The number of link components
\item The number of pieces in the connect sum decomposition
\item The number of pieces of the JSJ decomposition of the exterior
\item The Gromov norm of the exterior
\item The crossing number
\end{enumerate}
\end{cor:genericallynothyperbolic}
\begin{proof}
  All of these quantities have linear upper bounds in terms of the number of crossings of the link diagram. This is obvious for the first and last items; for the JSJ decomposition, we can triangulate the exterior of the link with a linearly many tetrahedra, and from there, there are only linearly many disjoint, nonparallel, incompressible surfaces in terms of the number of tetrahedra \cite{hass}. Hence, the number of pieces in the JSJ decomposition, which splits along tori, has a linear bound in terms of the number of crossings in the link diagram. The connect sum decomposition also splits along incompressible tori \cite{budney}, so this argument applies there as well.

In the case of the Gromov norm, again, we have a triangulation of the exterior with linearly many tetrahedra as a function of the number of crossings $n$; for example, the exterior can be divided into $4n+4$ tetrahedra \cite{weeks}. So, the Gromov norm is also bounded above by a linear function of $n$, as the Gromov norm is, by definition, an infimum over a set containing, for instance, the number of tetrahedra in a triangulation.

  To get a lower bound on the expectations, we apply Theorem~\ref{thm:genericallynothyperbolic} --- for each quantity, we take a local picture which adds some set amount to the total of that quantity when it occurs. For the number of pieces in the connect sum or JSJ decompositions, take the tangles in Figure~\ref{fig:forbiddentangle}. There are a linear number of each of these in expected value in a random link diagram, so the expected number of components is at least as large. The linearity of the expected Gromov norm follows immediately from this, as we have, in expectation, a linear number of connect summands which are figure 8 knots, for instance. Since, in the JSJ decomposition, the Gromov norm is (up to a constant factor) the sum of the volumes of the hyperbolic pieces \cite{budney}, we have a linear bound on the expectation of the Gromov norm.

Similarly, for the number of link components, we can use the Hopf link example from Figure~\ref{fig:forbiddentangle} again. Note that we even get linearly many split link components, as we could use that diagram but with one crossing changed, so that the loop is not linked anymore.

Finally, for the crossing number, if the expectation of the Gromov norm is bounded below by a linear function of the number of crossings, then so is the (minimal) crossing number --- this is an immediate consequence of the fact that the Gromov norm is bounded above by a linear function of the crossing number.
\end{proof}




\section{Larger Faces} \label{largerfaces}
In the case of alternating diagrams, links with more bigons tended to have smaller volume, fixing a number of crossings, as it decreases the twist number. What effect is there from the other possible sizes of faces? For an alternating diagram $D$ with $n$ crossings, let the \emph{face type} of the diagram be the ordered list
\[(F_2(D),F_3(D), ..., F_{3n}(D)) \]
where $F_i(D)$ is the number of faces in $D$ with length $i$.  The cutoff of $3n$ is an easy upper bound for the maximum length of a face in a diagram using an Euler characteristic argument. Looking first at the Hoste-Thistlethwaite census of link exteriors with 14 crossings, we can sort lexicographically by the face type and plot against the volume (see Figure \ref{fig:Vol_vs_facetype_alt_and_nonalt_14_crossings}).

\begin{figure}
\centering
\includegraphics[scale=.5]{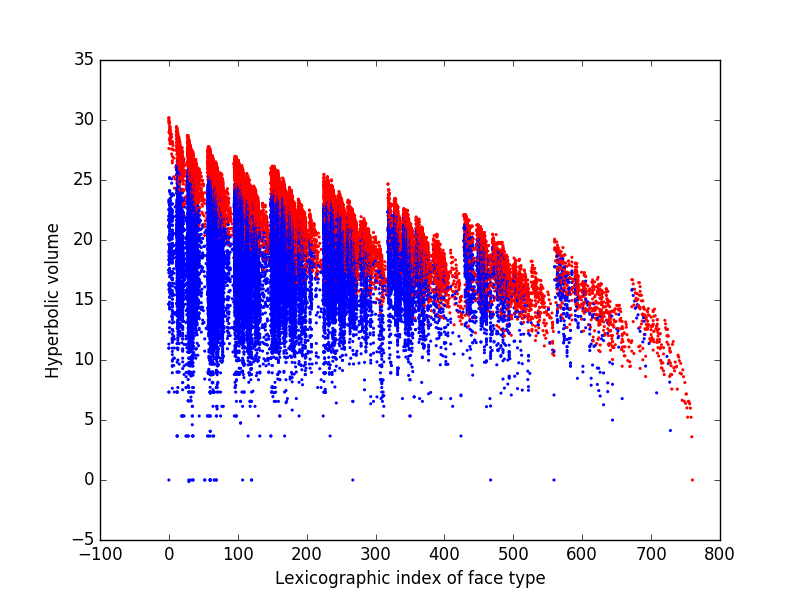}
\caption{}
\label{fig:Vol_vs_facetype_alt_and_nonalt_14_crossings}
\end{figure}

Each peak corresponds to incrementing the number of bigons. The general downward trend is because increasing the number of bigons decreases the twist number, so the volume tends to decrease.  If we look at randomly generated alternating links with 50 crossings we still see a similar pattern (see Figure \ref{fig:Vol_vs_facetype_alt_50}).

\begin{figure}[b]
\centering
\includegraphics[scale=.5]{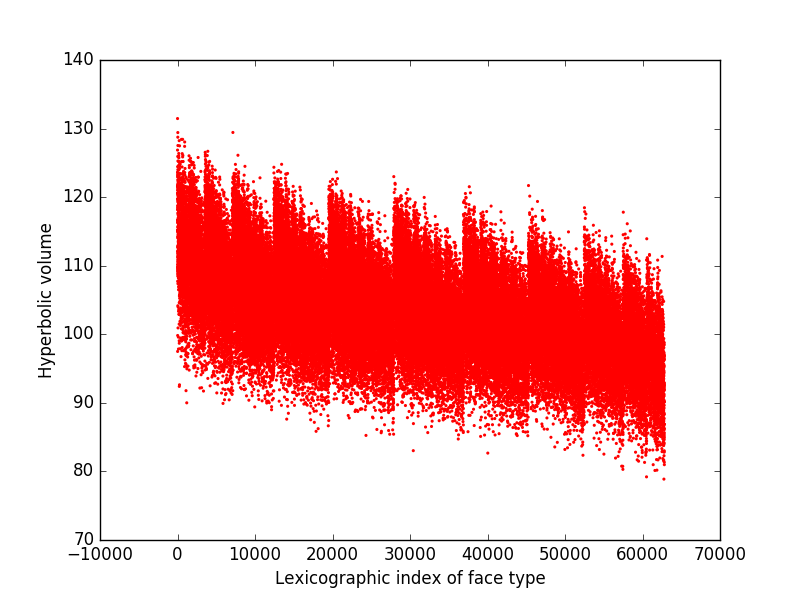}
\caption{}
\label{fig:Vol_vs_facetype_alt_50}
\end{figure}

Note that for a given peak, the volume tends to sharply decrease in the number of 3 sided faces that we see, so there appears to be some dependence on larger faces.  Zooming in on one of these peaks (fixing the number of bigons) gives Figure \ref{fig:Vol_vs_facetype_alt_50_zoomed}.

\begin{figure}[b]
\centering
\includegraphics[scale=.5]{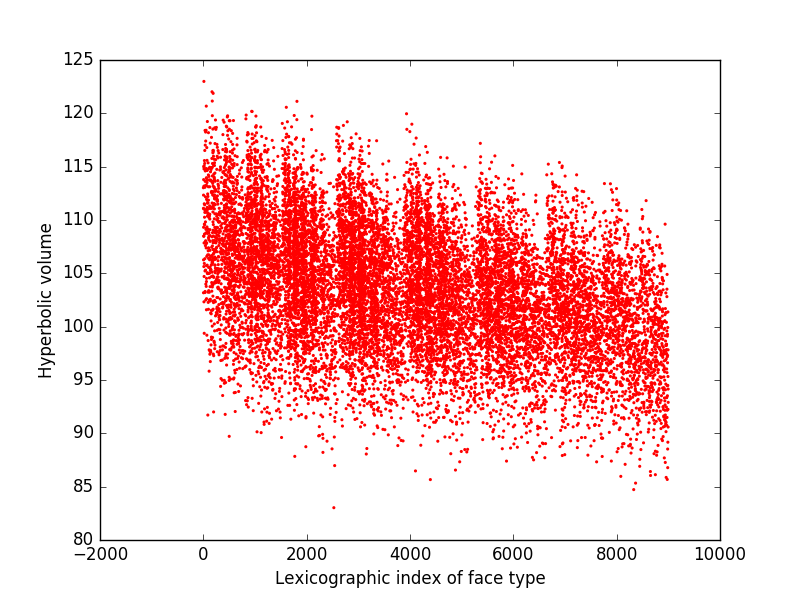}
\caption{}
\label{fig:Vol_vs_facetype_alt_50_zoomed}
\end{figure}

In \cite{garoufalidisle}, Garoufalidis and L\^{e} give a sequence of knot complements which asymptotically approach the maximal possible volume per crossing of $v_8$. These knots are formed by taking a ``weave'' and closing off the ends to form a knot, as in Figure~\ref{fig:weave}.

\begin{figure}
\centering
\includegraphics[scale=.43]{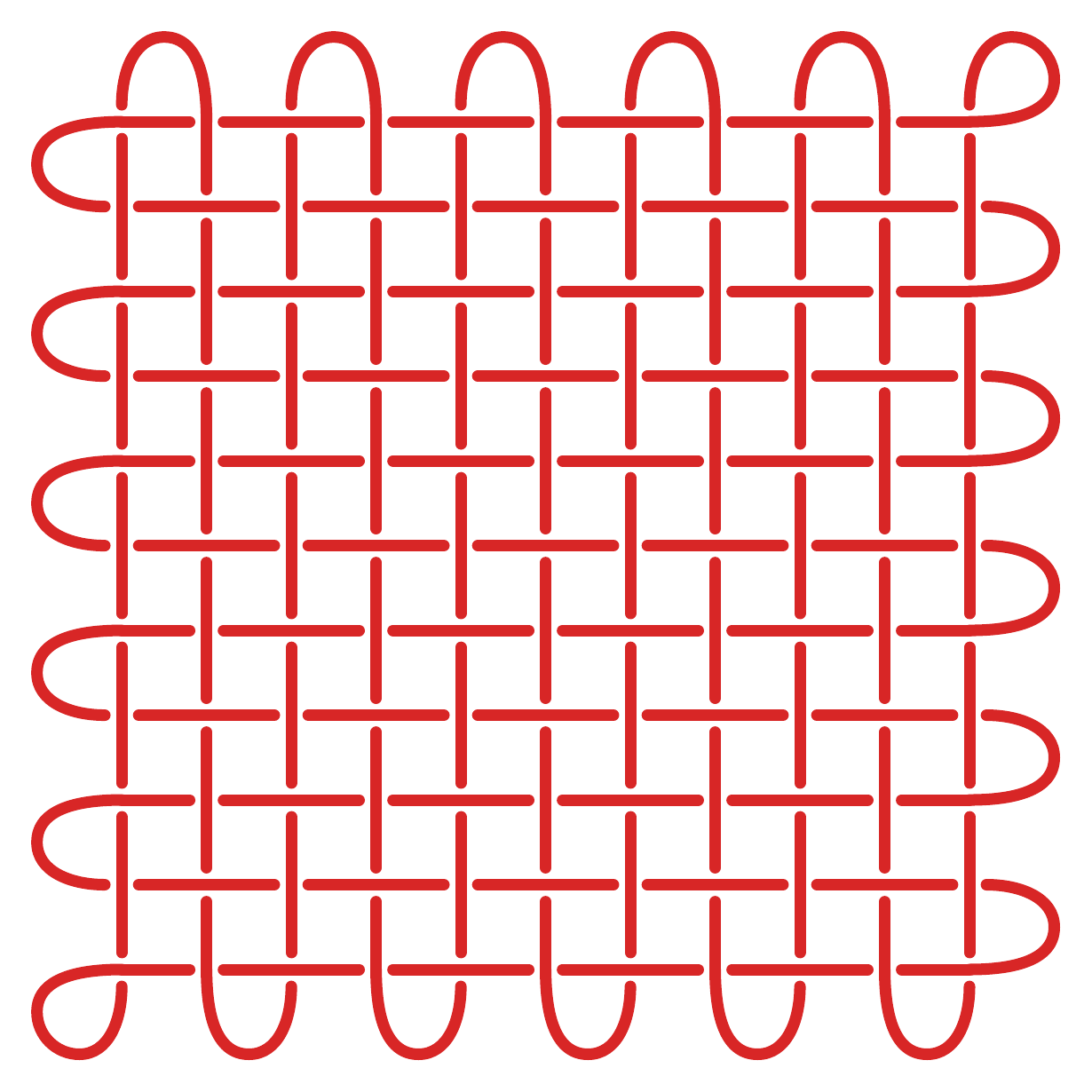}
\caption{Closing the weave off to make a knot}
\label{fig:weave}
\end{figure}

One might guess that the volume might then increase on average if you have more quadrilaterals in the complement of a diagram. However, if we fix the number of crossings, bigons, and faces of size 3, we see that the hyperbolic volume will have a tendency to \emph{decrease} when you increase the number of faces of size 4 (see Figure \ref{fig:volume_vs_quads}). The effect now though is weaker. Overall, the data then seems to support the idea that larger faces will mean greater hyperbolic volume (on average), though it is unclear why this should be the case.

\begin{figure}
\centering
\includegraphics[scale=.54]{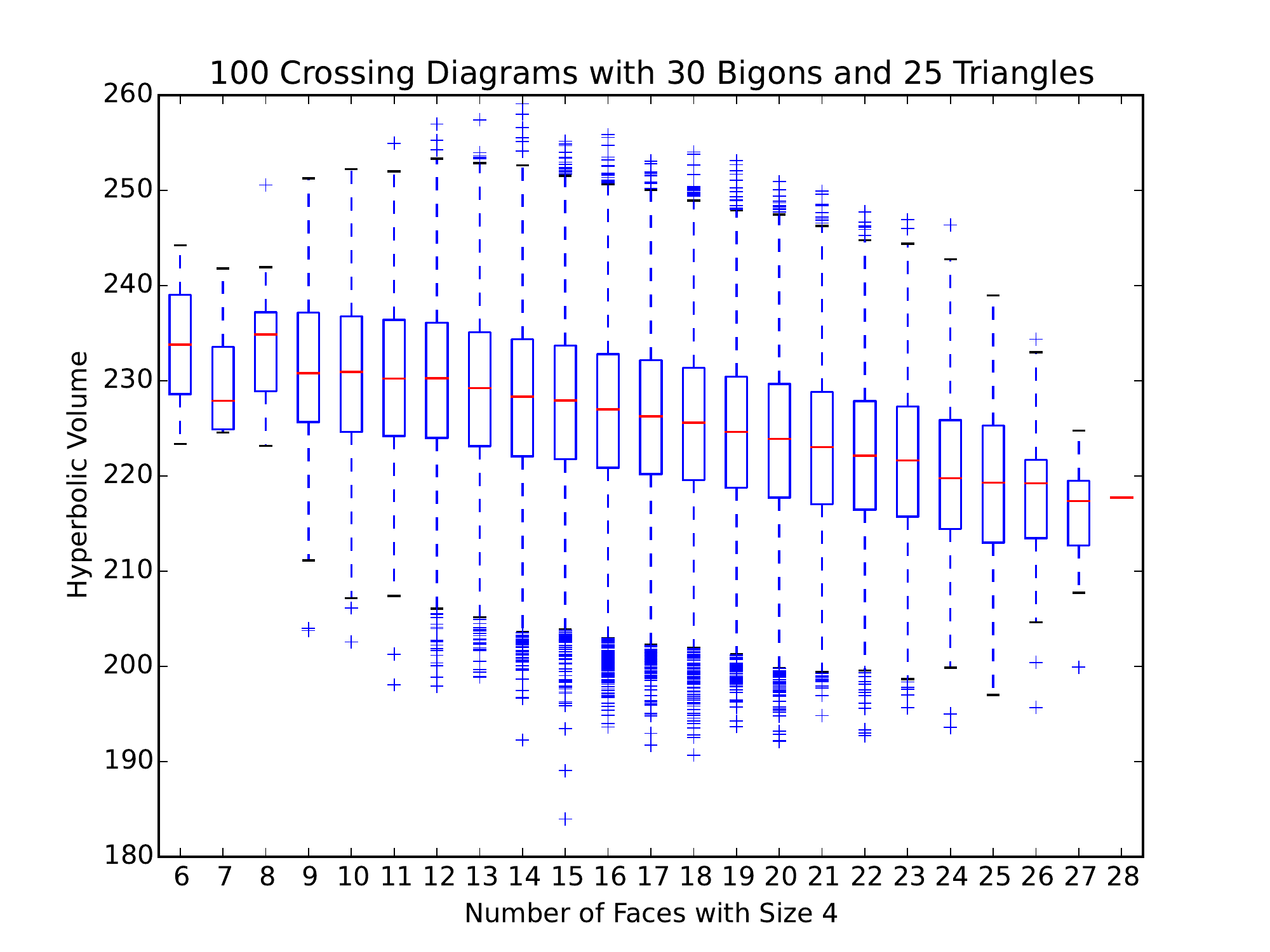}
\caption{}
\label{fig:volume_vs_quads}
\end{figure}


\end{document}